\newtheorem{theorem}{Theorem}[section]
\newtheorem{lemma}[theorem]{Lemma}
\newtheorem{proposition}[theorem]{Proposition}
\newtheorem{conjecture}[theorem]{Conjecture}
\newtheorem{notation}[theorem]{Notation}
\theoremstyle{definition}
\newtheorem{definition}[theorem]{Definition}
\newtheorem{remark}[theorem]{Remark}
\newtheorem{example}[theorem]{Example}
\newtheorem*{ConventionI}{Convention}
\newtheorem{theoremx}{Theorem}
\newcommand{\NN}{\mathbb{N}}
\newcommand{\ZZ}{\mathbb{Z}}
\newcommand{\KK}{\mathbb{K}}
\newcommand{\QQ}{\mathbb{Q}}
\newcommand{\FF}{\mathbb{F}}
\newcommand{\m}{\mathfrak{m}}
\begin{document}


\title{$F$-THRESHOLDS AND TEST IDEALS OF  THOM-SEBASTIANI TYPE POLYNOMIALS}

\author[M. Gonz\'alez Villa]{Manuel Gonz\'alez Villa$^{1}$}
\address{
Centro de inestigaci\'on en 
Matem\'aticas\\
Apartado Postal
402 \\
36000 Guanajuato, GTO.
}
\thanks{$^{1}$ The first author was partially supported by 
Spanish national grant
MTM2016-76868-C2-1-P}
\email{manuel.gonzalez@cimat.mx}

\author[D. Jaramillo-Velez]{Delio Jaramillo-Velez$^{2}$}
\address{
Departamento de
Matem\'aticas\\
Centro de Investigaci\'on y de Estudios
Avanzados del
IPN\\
Apartado Postal
14--740 \\
07000 Mexico City, CDMX.
}
\thanks{ $^{2}$ The second author was partially supported by  CONACyT Fellowship 862006}.
\email{djaramillo@math.cinvestav.mx}

\author[L. N\'u\~nez-Betancourt]{Luis N\'u\~nez-Betancourt$^{3}$}
\address{
Centro de inestigaci\'on en 
Matem\'aticas\\
Apartado Postal
402 \\
36000 Guanajuato, GTO.
}
\thanks{$^{3}$ The third author was partially supported by  CONACyT Grant 284598 and C\'atedras Marcos Moshinsky.}
\email{luisnub@cimat.mx}

\keywords{$F$-thresholds, test ideals, Thom-Sebastiani type polynomials, log canonical threshold.}
\subjclass[2010]{Primary 13A35; Secondary 14B05.}

\maketitle

\begin{abstract}
We provide a formula for  $F$-thresholds of a Thom-Sebastiani type polynomial over a perfect field of prime characteristic. This result extends the formula for the $F$-pure threshold of a diagonal hypersurface. We also compute the first test ideal of  Thom-Sebastiani type polynomials.  Finally, we apply our result to find hypersurfaces where the log canonical thresholds equals the $F$-pure thresholds for infinitely many prime numbers.
\end{abstract}


\section{Introduction}\label{intro-section}
The $F$-threshold of a polynomial $f$, with respect to an ideal $I$, $c^I(f)$, is a
numerical invariant given by  the asymptotic Frobenius order of $f$ in $I$ \cite{MTW,HMTW,DSNBP}. In particular, if $\m$ is a maximal ideal, $c^\m(f)$ equals to the $F$-pure threshold at $\m$.
This invariant  measures the severity of the singularities of $f$ at the point corresponding  to $\m$ \cite{TW}.   This $F$-threshold is the analogue in prime characteristic of the log-canonical threshold \cite{HY,TW,MTW}.
This number is usually very difficult to compute.  However, there are  formulas for diagonal \cite{HerDiag}, 
binomial \cite{HerBinom},  Calabi-Yau \cite{16}, Elliptic Curves \cite{16,Pagi}, and quasi-homogeneous one dimensional hypersurfaces \cite{HNBWZ}.

An important aspect of a numerical invariant is its computability in concrete examples. Finding formulas  for  Thom-Sebstiani type polynomials is a classic strategy to enlarge the class of concrete hypersurfaces for which a  invariant can be computed. 
This type of polynomial can be written as the sum of two polynomials in a different set of variables. Examples of this type of polynomials include, for instance, diagonal hypersurfaces, certain binomials, and suspensions by $n\geq 2$ points or ramified cyclic covers of hypersurfaces.
In singularity theory formulas for  Thom-Sebastiani type polynomials have been found  for, among others,  the spectrum \cite{SS}, the monodromy,  the Milnor number \cite{ST},  the log canonical threshold \cite{Kollar},  motivic  Igusa zeta functions \cite{DL}, multiplier ideals, and  jumping numbers \cite{MSS}. It is natural to look for such formulas for objects in prime characteristic.

In this paper we compute  $F$-thresholds and the first test ideal of a Thom-Sebastiani type polynomial. Our methods are inspired in the 
work of  Hern\'andez \cite{HerDiag}  on diagonal hypersurfaces.
 
 In our first main result, we provide a formula for the $F$-threshold  of a Thom-Sebastiani type polynomial with respect to the sum of two ideals.

\begin{theoremx}[see  Theorem \ref{4.2}]\label{A}
Let $\KK$ be a perfect field of prime characteristic $p$.
Let $R_{1}=\KK[x_{1},\dots,x_{n}]$ and $R_{2}=\KK[y_{1},\dots,y_{m}]$ with maximal homogeneous ideals 
$ \mathfrak{m}_{1}=\left(x_{1},\dots,x_{n}\right)$ and $\mathfrak{m}_{2}=\left(y_{1},\dots,y_{m}\right)$ respectively.
Let  $g_1\in \sqrt{I_1}\subseteq \m_1$ and $ g_2\in \sqrt{I_2}\subseteq \m_2$, where $I_1$ and $I_2$ are ideals.
Let $f=g_1+g_2 \in R_1\otimes_\KK R_2$, $a_1={\bf c}^{I_1}(g_1)$, and $a_2={\bf c}^{I_2}(g_2)$. 
If $a_1+a_2\leq 1$, then 
$$
	{\bf c}^{I_1+I_2}(f)=\left\lbrace \begin{array}{ll}
	a_{1}+a_{2},& \text{if}\quad L=\infty,\\
	&\\
	\left\langle a_{1}\right\rangle_{L} + \left\langle a_{2}\right\rangle_{L} +\frac{1}{p^{L}},& \text{if}\quad L<\infty,
	\end{array}\right.
	$$
	where $L=\sup\left\lbrace N\in \mathbb{N}\;|\; a_{1}^{(e)}+a_{2}^{(e)}\leq p-1\;\text{for all}\; 0\leq e\leq N \right\rbrace.$
\end{theoremx}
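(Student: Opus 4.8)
The plan is to reduce the computation of ${\bf c}^{I_1+I_2}(f)=\lim_{e\to\infty}\nu_f^{I_1+I_2}(p^e)/p^e$ (this limit exists because $f=g_1+g_2\in\sqrt{I_1+I_2}$, see \cite{MTW,DSNBP}) to a combinatorial problem about base-$p$ digits. Fix $e$ and write $S=R_1\otimes_\KK R_2$, $A_i=R_i/I_i^{[p^e]}$, and $\nu_i=\nu_{g_i}^{I_i}(p^e)$. Since $(I_1+I_2)^{[p^e]}=I_1^{[p^e]}S+I_2^{[p^e]}S$ and $\KK$ is a field, there is a $\KK$-algebra isomorphism $S/(I_1+I_2)^{[p^e]}\cong A_1\otimes_\KK A_2$. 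As $g_i\in\sqrt{I_i}=\sqrt{I_i^{[p^e]}}$, the image $\bar g_i\in A_i$ is nilpotent with $\bar g_i^{\,\nu_i}\neq 0$ and $\bar g_i^{\,\nu_i+1}=0$; since $\KK[t]$ is a PID this forces $\KK[\bar g_i]\cong\KK[t]/(t^{\nu_i+1})$, so $1,\bar g_i,\dots,\bar g_i^{\,\nu_i}$ are $\KK$-linearly independent in $A_i$. Expanding $f^N=\sum_{j=0}^{N}\binom{N}{j}g_1^{\,j}g_2^{\,N-j}$ and pushing it to $A_1\otimes_\KK A_2$, the $j$-th summand becomes $\binom{N}{j}\,\bar g_1^{\,j}\otimes\bar g_2^{\,N-j}$, and the nonzero ones (those with $j\le\nu_1$ and $N-j\le\nu_2$) are pairwise distinct members of a $\KK$-basis of $A_1\otimes_\KK A_2$, so no cancellation occurs. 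Hence $f^N\notin(I_1+I_2)^{[p^e]}$ exactly when $p\nmid\binom{N}{j}$ for some such $j$, and therefore
\[
\nu_f^{I_1+I_2}(p^e)=\max\set{\,j+k\ :\ 0\le j\le\nu_1,\ 0\le k\le\nu_2,\ p\nmid\binom{j+k}{j}\,}.
\]

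Next I would translate this into base $p$. By Kummer's theorem, $p\nmid\binom{j+k}{j}$ iff adding $j$ and $k$ in base $p$ produces no carries, in which case each base-$p$ digit of $j+k$ equals $j_{(t)}+k_{(t)}\le p-1$. Using the standard description of $\nu$ via the $F$-threshold (see \cite{DSNBP,HerDiag}) together with the non-terminating base-$p$ expansion convention, $\nu_i=\nu_{g_i}^{I_i}(p^e)$ is the integer with base-$p$ digit string $a_i^{(1)}a_i^{(2)}\cdots a_i^{(e)}$, that is $\nu_i=p^e\langle a_i\rangle_e$; the hypothesis $a_1+a_2\le 1$ (so $a_1,a_2\le 1$) guarantees $\nu_i\le p^e-1$. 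Thus the display above computes the maximum of $\sum_{t=1}^{e}(j_{(t)}+k_{(t)})p^{e-t}$ over all length-$e$ digit strings with $j_{(t)}+k_{(t)}\le p-1$ for every $t$, $(j_{(1)},\dots,j_{(e)})\le_{\mathrm{lex}}(a_1^{(1)},\dots,a_1^{(e)})$, and likewise for $k$. If $L=\infty$, or more generally for a fixed $e\le L$, then $a_1^{(t)}+a_2^{(t)}\le p-1$ for all relevant $t$, so $j=\nu_1$, $k=\nu_2$ is admissible and optimal; hence $\nu_f^{I_1+I_2}(p^e)=\nu_1+\nu_2$, and dividing by $p^e$ and letting $e\to\infty$ yields ${\bf c}^{I_1+I_2}(f)=a_1+a_2$.

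It remains to treat $L<\infty$ and $e>L$. For the upper bound, truncating the two lexicographic inequalities after coordinate $L$ gives $\sum_{t=1}^{L}j_{(t)}p^{e-t}\le p^e\langle a_1\rangle_L$ and $\sum_{t=1}^{L}k_{(t)}p^{e-t}\le p^e\langle a_2\rangle_L$, while $\sum_{t=L+1}^{e}(j_{(t)}+k_{(t)})p^{e-t}\le\sum_{t=L+1}^{e}(p-1)p^{e-t}=p^{e-L}-1$; adding these, $\nu_f^{I_1+I_2}(p^e)\le p^e(\langle a_1\rangle_L+\langle a_2\rangle_L+p^{-L})-1$. For the matching lower bound I would produce the admissible pair in which $j$ has digit string $a_1^{(1)}\cdots a_1^{(L)}\,a_1^{(L+1)}\,0\cdots 0$ and $k$ has digit string $a_2^{(1)}\cdots a_2^{(L)}\,\bigl(p-1-a_1^{(L+1)}\bigr)\,(p-1)\cdots(p-1)$, and check: $j\le\nu_1$ (its first $L+1$ digits agree with those of $\nu_1$ and the rest are $0$); $k<\nu_2$ (the $(L{+}1)$-st digit of $k$ is $p-1-a_1^{(L+1)}<a_2^{(L+1)}$, since by maximality of $L$ one has $a_1^{(L+1)}+a_2^{(L+1)}\ge p$); and no carry arises, the digitwise sums being $a_1^{(t)}+a_2^{(t)}\le p-1$ for $t\le L$ and $p-1$ for $t\ge L+1$. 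This pair realizes $j+k=p^e(\langle a_1\rangle_L+\langle a_2\rangle_L)+p^{e-L}-1$, meeting the upper bound, so $\nu_f^{I_1+I_2}(p^e)=p^e(\langle a_1\rangle_L+\langle a_2\rangle_L+p^{-L})-1$; dividing by $p^e$ and letting $e\to\infty$ gives ${\bf c}^{I_1+I_2}(f)=\langle a_1\rangle_L+\langle a_2\rangle_L+p^{-L}$.

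I expect the algebraic half — the tensor factorization and the linear-independence argument ruling out cancellation in $f^N$ — to be essentially formal once $\KK[\bar g_i]\cong\KK[t]/(t^{\nu_i+1})$ is in hand. The main work should be the base-$p$ bookkeeping of the last two paragraphs: pinning down the lexicographic truncation inequalities, verifying admissibility of the extremal pair in the boundary cases ($e=L+1$, digits equal to $0$ or $p-1$, and the effect of the non-terminating convention when some $a_i$ is a $p$-adic fraction), and confirming that the finitely many levels $e\le L$ do not disturb the limit. A secondary point to secure is the exact identity $\nu_{g_i}^{I_i}(p^e)=p^e\langle a_i\rangle_e$, which is where the hypothesis $a_1+a_2\le 1$ and the choice of base-$p$ expansion are used.
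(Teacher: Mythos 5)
Your proposal is correct and follows essentially the same route as the paper: establish that $f^N\not\in(I_1+I_2)^{[p^e]}$ iff some term $\binom{N}{j}g_1^{\,j}g_2^{\,N-j}$ survives modulo $I_1^{[p^e]}R+I_2^{[p^e]}R$ (the paper's Lemma~\ref{LemmaPorwerAndTerms}), translate via Lucas/Kummer, and carry out base-$p$ bookkeeping with $\nu_{g_i}^{I_i}(p^e)=p^e\langle a_i\rangle_e$ from Proposition~\ref{tra}. The only real variation is cosmetic: you prove the no-cancellation lemma by exhibiting $\KK[\bar g_i]\cong\KK[t]/(t^{\nu_i+1})$ and invoking linear independence of the products $\bar g_1^{\,j}\otimes\bar g_2^{\,N-j}$ in $A_1\otimes_\KK A_2$, whereas the paper takes a minimal offending index $t$, multiplies by $g_1^{\beta-t}$, and reduces to a single tensor term; and you obtain the exact value of $\nu_f^{I_1+I_2}(p^e)$ for $e>L$ directly by the lexicographic truncation bound plus a matching extremal pair (a slight mirror image of the paper's $(\eta_1(e),\eta_2(e))$ in Lemma~\ref{4.1}), while the paper gets the upper bound in that case by contradiction in Theorem~\ref{4.2}.
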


Unlike previous work on computation of $F$-thresholds,
we do not assume any shape on the $F$-pure thresholds of either $g_1$ or $g_2$.  Furthermore, we are able to compute $F$-thresholds with respect to ideals other than the maximal ideal. This is useful to compute lower bound of the number of $F$-jumping numbers for a Thom-Sebastiani type polynomial (see Remark \ref{RemNumberFTbound}).

An important conjecture  regarding the $F$-pure threshold states that ${\bf lct}(f)={\bf c}^{\m}(f_p)$
for infinitely many prime numbers $p$, where  $f\in\QQ[x_1,\ldots, x_n]$ and $f_p\in\FF_p[x_1,\ldots,x_n]$ is its model module $p$ (see \cite[Conjecture 3.6]{MTW} and references therein). 
Using Theorem  \ref{A} , we are able to show several families of polynomials where this conjecture holds (see Section \ref{SecLCT}).

We are able to compute the first test ideal of a Thom-Sebastiani type polynomial
at the $F$-pure threshold. We point out that, unlike previous work, we do not assume that the test ideals of $g_1$ or $g_2$ are monomial. In fact, we do not assume any shape of the ideals. Furthermore, we do not make any assumption on the characteristic.

\begin{theoremx}[see Theorem \ref{456}]\label{B}
Let $\KK$ be a perfect field of prime characteristic $p$.
Let $R_{1}=\KK[x_{1},\dots,x_{n}]$ and $R_{2}=\KK[y_{1},\dots,y_{m}]$ with maximal homogeneous ideals 
$ \mathfrak{m}_{1}=\left(x_{1},\dots,x_{n}\right)$ and $\mathfrak{m}_{2}=\left(y_{1},\dots,y_{m}\right)$ respectively.
Let $f=g_1+g_2 \in R=R_1 \otimes_\KK R_2=\KK[x_1, \dots, x_n, y_1, \dots, y_m]$, where  $g_1\in \mathfrak{m}_{1}$ and $g_2\in \mathfrak{m}_{2}$. Let  $a_{1}={\bf c}^{\mathfrak{m}_{1}}(g_{1})=\frac{r_1}{s_1}$,  $a_{2}={\bf c}^{\mathfrak{m}_{2}}(g_{2})=\frac{r_2}{s_2}$, and $\mathfrak{m}=\left(x_{1},\dots,x_{n},y_{1},\dots,y_{m}\right)$, where $r_i,s_i\in\NN$. 
Suppose that  $a_1+a_2\leq 1$, and   set 
$$L=\sup\left\lbrace N\in \mathbb{N}\;|\; a_{1}^{(e)}+a_{2}^{(e)}\leq p-1\;\text{for all}\; 0\leq e\leq N \right\rbrace,$$
and  $d=\sup\left\lbrace e\leq L\quad|\quad a_{1}^{(e)}+a_{2}^{(e)}\leq p-2\right\rbrace.$
Then,
$$
\tau\left( f^{{\bf c}^{\mathfrak{m}}(f)}\right) =\left\lbrace \begin{array}{cl}
(f)&\text{if } {\bf c}^{\mathfrak{m}}(f)=1,  \\
\tau(g^{a_1}_{1})+\tau(g^{a_2}_{2})&\text{if } {\bf c}^{\mathfrak{m}}(f)\not\in p^{-e}\cdot\mathbb{N},\\
\left( g_{1}^{\lceil p^{d} a_{1}\rceil}\right)^{[1/p^{d}]}+\left( g_{2}^{\lceil p^{d} a_{2}\rceil}\right)^{[1/p^{d}]}&\text{if }  {\bf c}^{\mathfrak{m}}(f)\in \ZZ[\frac{1}{p}]\; \& \;{\bf c}^{\mathfrak{m}}(f)\neq 1 .
\end{array}\right.
$$
\end{theoremx}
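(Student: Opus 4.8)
The plan is to follow the strategy of Hern\'andez for diagonal hypersurfaces, but replacing the explicit monomial computations with abstract manipulations of test ideals. The starting point is the transformation rule for test ideals under Frobenius powers together with the well-known description $\tau(f^{t}) = \bigl(f^{\lceil p^{e} t \rceil}\bigr)^{[1/p^{e}]}$ for large $e$ when $t$ is rational with $p$-power denominator, and the analogous eventual-stabilization statement when $t$ has a non-terminating base-$p$ expansion. So the first step is to recall Theorem \ref{A} (equivalently Theorem \ref{4.2}) to identify ${\bf c}^{\m}(f)$ precisely: it is $a_{1}+a_{2}$ when $L = \infty$ and $\langle a_{1}\rangle_{L} + \langle a_{2}\rangle_{L} + p^{-L}$ when $L < \infty$. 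The trichotomy in the statement exactly matches the cases $L=\infty$ with $a_1+a_2=1$ (giving ${\bf c}^\m(f)=1$), $L=\infty$ with $a_1+a_2<1$ (the non-$p$-power-denominator case, since then the base-$p$ digits of $a_1$ and $a_2$ never overflow and add digit-wise, so $c^\m(f)$ inherits an infinite expansion), and $L<\infty$ (where the truncation plus $p^{-L}$ forces a $p$-power denominator). So the structural bookkeeping is: translate the hypotheses on $L$ and $d$ into statements about the base-$p$ expansions of $a_1$, $a_2$, and $c := {\bf c}^{\m}(f)$.

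For the case $c=1$: here $f$ defines an $F$-pure hypersurface at $\m$, and $\tau(f^{1}) = \tau(R,f) $. One shows $\tau(f) = (f)$ by noting that $f$ is a nonzerodivisor and $R/(f)$ is $F$-pure (this follows because $c^\m(f)=1$ forces $f^{p-1}\notin \m^{[p]}$, hence $R/(f)$ is $F$-split by Fedder), and for an $F$-pure hypersurface the test ideal of the pair $(R,f)$ is the principal ideal $(f)$; alternatively, $\tau(f^1)$ is, by definition, $f\cdot\tau(R/(f))=f\cdot R = (f)$ using the $F$-purity of the quotient. For the middle case, when $c\notin p^{-e}\NN$: from $L=\infty$ and $a_1+a_2<1$ one gets that for every $e$, the $e$-th digit sum $a_1^{(e)}+a_2^{(e)}\le p-1$, so the truncations satisfy $\langle c\rangle_e = \langle a_1\rangle_e + \langle a_2\rangle_e$ for all $e$. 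Then $\lceil p^e c\rceil = \lceil p^e a_1\rceil + \lceil p^e a_2\rceil -\varepsilon_e$ for a controlled correction $\varepsilon_e\in\{0,1\}$, and applying the "splitting" of Frobenius on the tensor product $R = R_1\otimes_\KK R_2$ — namely $(J_1 + J_2)^{[1/p^e]} = J_1^{[1/p^e]} + J_2^{[1/p^e]}$ for ideals $J_i$ extended from $R_i$, which holds because $\KK$ is perfect and $R_1, R_2$ are polynomial rings — one computes $\bigl(f^{\lceil p^e c\rceil}\bigr)^{[1/p^e]}$ by expanding $f^{\lceil p^e c\rceil} = (g_1+g_2)^{\lceil p^e c\rceil}$ binomially, sorting monomials by how the exponent splits between the $g_1$- and $g_2$-part, and recognizing the two extreme blocks as $g_1^{\lceil p^e a_1\rceil}$-multiples and $g_2^{\lceil p^e a_2\rceil}$-multiples after the $[1/p^e]$ operation kills the mixed terms in the limit; passing to $e\gg 0$ gives $\tau(f^c) = \tau(g_1^{a_1}) + \tau(g_2^{a_2})$. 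The third case is handled the same way but now the expansion terminates at level $d$ (the last level where the digit sum is $\le p-2$, so no carrying propagates past it), giving $\tau(f^c) = \bigl(g_1^{\lceil p^d a_1\rceil}\bigr)^{[1/p^d]} + \bigl(g_2^{\lceil p^d a_2\rceil}\bigr)^{[1/p^d]}$ directly at finite level $d$.

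The main obstacle is the binomial-expansion argument: one must show that in $\bigl((g_1+g_2)^{N}\bigr)^{[1/p^e]}$ (for $N=\lceil p^e c\rceil$), the cross terms $g_1^i g_2^{N-i}$ with both $i$ and $N-i$ "large" do not contribute anything beyond $\tau(g_1^{a_1})+\tau(g_2^{a_2})$, while the two extreme blocks produce exactly those two summands. This requires a careful analysis of which monomials $g_1^i g_2^{N-i}$ lie in $\m^{[p^e]}$ after accounting for the relevant multinomial coefficients (which can vanish mod $p$ — here Lucas's theorem on binomial coefficients is the right tool, matched against the digit conditions defining $L$ and $d$), together with the key fact that $\bigl(g_1^i g_2^{N-i}\bigr)^{[1/p^e]} \subseteq \bigl(g_1^i\bigr)^{[1/p^e]}\cdot\bigl(g_2^{N-i}\bigr)^{[1/p^e]}$ and that one of the two factors is the unit ideal (or at least lies inside the relevant test ideal) precisely because the corresponding exponent is below the $F$-pure threshold of that $g_j$. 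I expect the bookkeeping of these digit/carry conditions — showing $L$ and $d$ as defined are exactly the thresholds where the contributions change character — to be the technical heart, while the ring-theoretic inputs ($F$-purity of the $c=1$ quotient, additivity of $[1/p^e]$ over the tensor decomposition, and the standard eventual formula for $\tau(f^t)$) are routine citations.
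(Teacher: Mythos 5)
Your proposal matches the paper's overall route: the same trichotomy of cases driven by Theorem~\ref{A}, the reduction to a single large exponent $e$ with a controlled digit condition, and the use of Lucas's theorem and Frobenius roots on the tensor product $R=R_1\otimes_\KK R_2$. But the key technical step is not actually carried out, and the way you describe it has the containment directions reversed.

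When $N=\lceil p^e c\rceil = p^e(\langle a_1\rangle_e+\langle a_2\rangle_e)+1$, set $\theta_i=p^e\langle a_i\rangle_e=\nu^{\m_i}_{g_i}(p^e)$. For every split $i+j=N$ we have $i\geq\theta_1+1$ or $j\geq\theta_2+1$ by pigeonhole, so \emph{every} binomial term of $(g_1+g_2)^N$ lies in $(g_1^{\theta_1+1})+(g_2^{\theta_2+1})$ — there are no ``cross terms to kill.'' The inclusion $(f^N)^{[1/p^e]}\subseteq(g_1^{\lceil p^e a_1\rceil})^{[1/p^e]}+(g_2^{\lceil p^e a_2\rceil})^{[1/p^e]}$ is therefore the \emph{easy} direction, and your phrase ``the $[1/p^e]$ operation kills the mixed terms in the limit'' does not describe anything meaningful here. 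What must be proved, and what your proposal does not establish, is the \emph{reverse} inclusion: that the generators of $(g_1^{\lceil p^e a_1\rceil})^{[1/p^e]}$ (and symmetrically for $g_2$) actually lie in $(f^N)^{[1/p^e]}$. This is the content of the paper's Lemma~\ref{Lemma-p-2}, proved by choosing a free basis of $R_i$ over $R_i^{p^e}$ of the shape $1,g_i,\dots,g_i^{\theta_i},w_{i,1},\dots,w_{i,s_i}$, writing $g_1^{\theta_1+1}=\sum h_j^{p^e}\cdot(\text{basis element})$, and then observing that in the binomial expansion of $f^{\theta_1+\theta_2+1}$ the single term $\binom{\theta_1+\theta_2+1}{\theta_1+1}\,g_1^{\theta_1+1}g_2^{\theta_2}$ contributes $h_j^{p^e}$ against the basis elements $g_1^{j}g_2^{\theta_2}$ and $w_{1,j}g_2^{\theta_2}$, with no cancellation from the other binomial terms because those land on distinct basis elements. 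The hypothesis $a_1^{(e)}+a_2^{(e)}\leq p-2$ is precisely what makes $\binom{\theta_1+\theta_2+1}{\theta_1+1}\not\equiv 0\bmod p$ (Lemma~\ref{lemma1}); your proposal mentions Lucas and the digit conditions but never ties them to this specific coefficient or to the basis extraction, which is the load-bearing step. Until that argument is supplied, the claimed equality $\tau(f^c)=\tau(g_1^{a_1})+\tau(g_2^{a_2})$ only has one inclusion.

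Two smaller points. First, the $c^{\m}(f)=1$ case does not need Fedder or $F$-purity of $R/(f)$: for any nonzero $f$ in a polynomial ring, $(f^{p^e})^{[1/p^e]}=(f)$ directly from the definition, so $\tau(f^1)=(f)$ is a one-line computation. Second, your identification of the trichotomy with ``$L=\infty$ and $a_1+a_2<1$'' versus ``$L<\infty$'' is not quite right: $L=\infty$ with $a_1+a_2<1$ can still give $c^{\m}(f)\in\ZZ[\tfrac1p]$ (when the digits are eventually all $p-1$), which is why the paper phrases its second and third cases in terms of whether $c^{\m}(f)$ has $p$-power denominator and introduces the auxiliary level $d\leq L$ via Lemma~\ref{lemma-d}; the third case then runs the Lemma~\ref{Lemma-p-2} computation at $e=d$, where the digit condition $\leq p-2$ still holds.
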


We point out that Theorem \ref{B} includes several cases that were not computed before,  even for diagonal hypersurfaces (see Example \ref{ExNotHdz}).

\begin{ConventionI}
 In this manuscript, $p$ denotes a prime number and  $\KK$ denotes a perfect field of characteristic $p$.
\end{ConventionI}

\section{Background}

\subsection{Expasions in base $p$}\label{2}

In this section we recall  the notion of  the non-terminating base $p$ expansion of a number in $(0,1]$, and some of its properties. 

\begin{definition}\label{2.1}
Let $\alpha \in \left( 0,1 \right] $, and $p$ be a prime number. The non-terminating base $p$ expansion of $\alpha$ is the  expression $\alpha=\displaystyle\sum_{e\geq 1}\frac{\alpha^{(e)}}{p^{e}}$, with $0\leq\alpha^{(e)}\leq p-1 $, such that for all $ n>0$, there exists $ e\geq n$ with $\alpha^{(e)}\not= 0$. The number $\alpha^{(e)}$ is unique and it is called the $e^{th}$ digit of the non-terminating base $p$ expansion of $\alpha$.
\end{definition}

\begin{example}\label{2.2}
Let $\alpha=\frac{a}{b}$ be a  rational number in $\left( 0,1 \right] $. If $p\equiv 1 \mod b$, then  $p=bw+1$ for some $w\geq 1$. 
The non-terminating base $p$ expansion of $\alpha$ is periodic, and it is given by $\alpha=\displaystyle\sum_{e\geq 1}\frac{aw}{p^{e}}$.
\end{example}

The elementary notion of adding without carrying for integers (base $p$) extends to non-terminating base $p$ expansions.

\begin{definition}\label{2.3}
	Let $\alpha\in \left( 0,1\right] $.
	\begin{enumerate}
		\item For $e\geq1$, the $e^{th}$ truncation of the non-terminating base $p$ expansion of $\alpha$ is defined  by 
		$$
		\left\langle \alpha \right\rangle_{e}=\frac{\alpha^{(1)}}{p}+\dots+\frac{\alpha^{(e)}}{p^{e}}. 
		$$
		 We use the conventions $\left\langle \alpha \right\rangle_{\infty}=\alpha $ and $\left\langle 0 \right\rangle_{e}=0 $.
		\item Let $p<n$, and   $\alpha_{1},\alpha_{2}\in\left(0,1\right)$. We say that 
		$\alpha_{1}$ and $\alpha_{2} $ add without carrying in base $p$ if 
		$$
		\alpha_{1}^{(e)}+\alpha_{2}^{(e)}\leq p-1 \;\text{ for every}\quad e\geq 1.
		$$
	\end{enumerate}
\end{definition}

\begin{remark}\label{addwithout}
The non-terminating base $p$ expansions of $\alpha_1$ and $ \alpha_n$ add without
carrying if and only if the base $p$ expansions of the integers $p^e\left\langle  \alpha_1 \right\rangle_e$ and  $p^e \left\langle \alpha_n \right\rangle_e$ add without carrying for all $e>1$.
\end{remark}

\begin{lemma}[{\cite[Lemma 4.6]{7}}]\label{2.4}
	Let $\alpha\in(0,1]$. Then the following statements hold.
	\begin{enumerate}
		\item If $\alpha\not\in\frac{1}{p^{e}}\cdot\mathbb{N}$, then $\lfloor p^{e}\alpha\rfloor=p^{e}\left\langle \alpha\right\rangle_{e}$.
		\item $\lceil p^{e}\alpha \rceil=p^{e}\left\langle \alpha\right\rangle_{e}+1$.
		\item Let  $\alpha_{1},\alpha_{2} \in\left(0,1\right]$. If $\alpha_{1}$ and $\alpha_{2}$ add without carrying in base $p$ and $\alpha:=\alpha_{1}+\alpha_2\leq1$, then $\alpha^{(e)}=\alpha_{1}^{(e)}+\alpha_{2}^{(e)}$.
	\end{enumerate}
\end{lemma}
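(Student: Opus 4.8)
The plan is to reduce everything to an analysis of the tail of the non-terminating base $p$ expansion. Writing $\alpha=\sum_{k\geq1}\alpha^{(k)}/p^{k}$ and multiplying by $p^{e}$, split
\[
p^{e}\alpha \;=\; p^{e}\langle\alpha\rangle_{e} \;+\; \tau_{e}(\alpha),
\qquad\text{where}\qquad
\tau_{e}(\alpha):=\sum_{j\geq1}\frac{\alpha^{(e+j)}}{p^{j}},
\]
and note that $p^{e}\langle\alpha\rangle_{e}=\sum_{k=1}^{e}\alpha^{(k)}p^{e-k}\in\NN$, while $0<\tau_{e}(\alpha)\leq\sum_{j\geq1}(p-1)/p^{j}=1$: the lower bound holds because the expansion is non-terminating (some digit past position $e$ is nonzero), and equality in the upper bound forces $\alpha^{(e+j)}=p-1$ for every $j\geq1$. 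The crux is then the digit criterion
\[
\alpha\in\tfrac{1}{p^{e}}\cdot\NN
\quad\Longleftrightarrow\quad
\alpha^{(e+j)}=p-1\ \text{for all}\ j\geq1 .
\]
For $(\Leftarrow)$, if the tail is all $(p-1)$'s then $\tau_{e}(\alpha)=1$ and $p^{e}\alpha=p^{e}\langle\alpha\rangle_{e}+1\in\NN$. For $(\Rightarrow)$, if $\alpha=1$ this is clear since then $\alpha^{(k)}=p-1$ for all $k$; and if $\alpha=k/p^{e}$ with $1\leq k<p^{e}$, then $\alpha$ has a terminating base $p$ expansion supported in positions $\leq e$, and passing to the non-terminating expansion decreases the last nonzero digit (at some position $i_{0}\leq e$) by $1$ and fills every later position with $p-1$; in particular $\alpha^{(e+j)}=p-1$ for all $j\geq1$.

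Granting the criterion, parts (1) and (2) are immediate. If $\alpha\notin\frac{1}{p^{e}}\cdot\NN$, then $0<\tau_{e}(\alpha)<1$, so $\lfloor p^{e}\alpha\rfloor=p^{e}\langle\alpha\rangle_{e}$ and $\lceil p^{e}\alpha\rceil=p^{e}\langle\alpha\rangle_{e}+1$; this gives (1), and (2) in this case. If $\alpha\in\frac{1}{p^{e}}\cdot\NN$, then $\tau_{e}(\alpha)=1$ and $p^{e}\alpha=p^{e}\langle\alpha\rangle_{e}+1$ is already an integer, so both the floor and the ceiling equal $p^{e}\langle\alpha\rangle_{e}+1$, which is (2) (part (1) being vacuous).

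For (3), I would invoke the uniqueness of the non-terminating expansion built into Definition~\ref{2.1}. Put $\delta_{e}:=\alpha_{1}^{(e)}+\alpha_{2}^{(e)}$. The no-carrying hypothesis gives $0\leq\delta_{e}\leq p-1$, so $\sum_{e\geq1}\delta_{e}/p^{e}$ is a bona fide base $p$ expansion, whose value is $\alpha_{1}+\alpha_{2}=\alpha\in(0,1]$. It is moreover non-terminating: since the expansion of $\alpha_{1}$ is non-terminating, for each $n$ there is $e\geq n$ with $\alpha_{1}^{(e)}\neq0$, hence $\delta_{e}\geq1$. Therefore $\sum_{e\geq1}\delta_{e}/p^{e}$ is \emph{the} non-terminating base $p$ expansion of $\alpha$, and by uniqueness $\alpha^{(e)}=\delta_{e}=\alpha_{1}^{(e)}+\alpha_{2}^{(e)}$ for all $e$. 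The only genuinely delicate point is the digit criterion for membership in $\frac{1}{p^{e}}\cdot\NN$, in particular keeping straight the boundary case $\alpha=1$ and the effect on the tail of converting a terminating expansion into a non-terminating one; everything else is manipulation of geometric series together with the uniqueness already available from Definition~\ref{2.1}.
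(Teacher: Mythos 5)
Your proof is correct: the decomposition $p^{e}\alpha = p^{e}\langle\alpha\rangle_{e} + \tau_{e}(\alpha)$ with $\tau_{e}(\alpha)\in(0,1]$, the digit criterion for membership in $\tfrac{1}{p^{e}}\cdot\NN$, and the appeal to uniqueness of the non-terminating expansion for part (3) are all sound. The paper itself gives no proof here, deferring to the cited Lemma 4.6 of Hern\'andez; your argument is the standard elementary one and matches that reference in spirit.
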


To end this section we include some useful results on divisibility for integers. 

\begin{lemma}[{\cite{Luc78,Dic}}]\label{2.5}
	Let $k_{1},k_{2}\in \mathbb{N}$ and set $N=k_{1}+k_{2}$. Then 
	$$
	{N \choose k_{i}}=\frac{N!}{k_{1}!k_{2}!}\not\equiv 0\mod p
	$$
	if and only if $k_{1},k_{2}$ add without carrying in base $p$.
\end{lemma}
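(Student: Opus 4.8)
The plan is to deduce this from the standard computation of the $p$-adic valuation of a factorial (Legendre's formula) together with the carry-counting interpretation of $v_p\binom{N}{k_1}$ due to Kummer; an alternative, essentially equivalent, route runs through the Frobenius identity $(1+t)^p=1+t^p$ in $\FF_p[t]$ and the resulting factorization of $(1+t)^N$, which is the content of Lucas' theorem. Since the statement is classical \cite{Luc78,Dic}, I would keep the argument brief and cite those sources for the original formulations.

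First, write $v_p(\cdot)$ for the $p$-adic valuation, and for $n\in\NN$ with base $p$ expansion $n=\sum_{i\ge 0}n_i p^i$ (so $0\le n_i\le p-1$ and only finitely many $n_i$ are nonzero) let $s_p(n)=\sum_{i\ge 0}n_i$ denote its digit sum. Legendre's formula gives
$$v_p(n!)=\sum_{i\ge 1}\left\lfloor \frac{n}{p^i}\right\rfloor=\frac{n-s_p(n)}{p-1};$$
the first equality counts multiplicities of $p$ among $1,\dots,n$, and the second follows from $\lfloor n/p^i\rfloor=\sum_{j\ge i}n_j p^{\,j-i}$ and summing over $i\ge 1$. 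Applying this with $N=k_1+k_2$ and subtracting valuations in $\binom{N}{k_1}=N!/(k_1!\,k_2!)$ yields
$$v_p\!\left(\binom{N}{k_1}\right)=\frac{(N-s_p(N))-(k_1-s_p(k_1))-(k_2-s_p(k_2))}{p-1}=\frac{s_p(k_1)+s_p(k_2)-s_p(N)}{p-1},$$
where the non-digit-sum terms cancel precisely because $N=k_1+k_2$. Hence $\binom{N}{k_1}\not\equiv 0\bmod p$ if and only if $s_p(k_1)+s_p(k_2)=s_p(N)$.

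It remains to identify this digit-sum equality with the no-carrying condition (Definition \ref{2.3}, cf. Remark \ref{addwithout}). Run the base $p$ addition algorithm on $k_1=\sum_i a_i p^i$ and $k_2=\sum_i b_i p^i$: set $c_0=0$ and inductively let $c_{i+1}\in\{0,1\}$ and $N_i\in\{0,\dots,p-1\}$ be determined by $a_i+b_i+c_i=N_i+p\,c_{i+1}$, so that $N=\sum_i N_i p^i$. Summing these identities over all $i$ and rearranging the common carry contribution on both sides gives
$$s_p(N)=s_p(k_1)+s_p(k_2)-(p-1)\sum_{i\ge 1}c_i.$$
Since every $c_i\ge 0$, the equality $s_p(k_1)+s_p(k_2)=s_p(N)$ holds exactly when all carries vanish, i.e. when $a_i+b_i\le p-1$ for every $i$, which is precisely the statement that $k_1$ and $k_2$ add without carrying in base $p$. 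Combined with the previous paragraph, this proves the lemma. The only genuine bookkeeping is the telescoping identity displayed above; I expect no real obstacle beyond writing that step carefully.
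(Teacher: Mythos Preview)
Your argument is correct: Legendre's digit-sum formula for $v_p(n!)$ gives $v_p\binom{N}{k_1}=\frac{s_p(k_1)+s_p(k_2)-s_p(N)}{p-1}$, and the telescoping identity $s_p(N)=s_p(k_1)+s_p(k_2)-(p-1)\sum_{i\ge 1}c_i$ shows this vanishes precisely when every carry $c_i$ is zero. There is nothing to compare against, since the paper does not supply a proof of this lemma; it simply records the classical statement with references to Lucas and Dickson, exactly as you anticipated by citing \cite{Luc78,Dic}.
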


\begin{theorem}[Dirichlet]\label{2.6.1}
Given $n$ and $m$ non-zero natural numbers with ${\rm gcd}(n,m)=1$, there exists infinitely many prime numbers $p$ such that $p\equiv n\mod m$.
\end{theorem}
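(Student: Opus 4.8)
The statement is the classical theorem of Dirichlet on primes in arithmetic progressions; in the paper itself one simply cites a standard reference (e.g.\ Serre, \emph{A Course in Arithmetic}), but here is the plan for a proof. The method is the standard analytic one via Dirichlet $L$-functions, since no elementary argument is known that handles every admissible residue class simultaneously. We may assume $\phi(m)\ge2$, the cases $m\in\{1,2\}$ being trivial. Write $\widehat G$ for the group of Dirichlet characters modulo $m$, that is, the homomorphisms $\chi\colon(\ZZ/m\ZZ)^{\times}\to\CC^{\times}$ extended by $\chi(k)=0$ when $\gcd(k,m)>1$; it is a finite abelian group of order $\phi(m)$ with identity the principal character $\chi_{0}$, and the orthogonality relations give, using $\gcd(n,m)=1$,
$$
\frac{1}{\phi(m)}\sum_{\chi\in\widehat G}\overline{\chi(n)}\,\chi(k)=
\begin{cases}1,& k\equiv n\mod m,\\ 0,&\text{otherwise}.\end{cases}
$$
To each $\chi$ attach the Dirichlet $L$-series $L(s,\chi)=\sum_{k\ge1}\chi(k)k^{-s}$, absolutely convergent for $\Re(s)>1$, where it has the Euler product $L(s,\chi)=\prod_{q}(1-\chi(q)q^{-s})^{-1}$ over primes $q$. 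Taking a logarithm of the Euler product and separating the contribution of the higher prime powers gives, for $\Re(s)>1$,
$$
\log L(s,\chi)=\sum_{q}\frac{\chi(q)}{q^{s}}+R_{\chi}(s),\qquad
|R_{\chi}(s)|\le\sum_{q}\frac{1}{q(q-1)}<\infty\quad(\Re(s)\ge1),
$$
so $R_{\chi}(s)$ stays bounded as $s\to1^{+}$.

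Next I would multiply $\log L(s,\chi)$ by $\overline{\chi(n)}$, sum over $\chi\in\widehat G$, and apply orthogonality to obtain, as $s\to1^{+}$,
$$
\sum_{q\equiv n\mod m}\frac{1}{q^{s}}
=\frac{1}{\phi(m)}\sum_{\chi\in\widehat G}\overline{\chi(n)}\,\log L(s,\chi)+O(1),
$$
so it is enough to show the right-hand side tends to $+\infty$ as $s\to1^{+}$, which forces infinitely many primes $q\equiv n\mod m$. The principal character supplies the divergence: $L(s,\chi_{0})=\zeta(s)\prod_{q\mid m}(1-q^{-s})$ inherits the simple pole of the Riemann zeta function at $s=1$, so $\log L(s,\chi_{0})\to+\infty$. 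For a non-principal $\chi$ the partial sums of the coefficients $\chi(k)$ are bounded (every block of $m$ consecutive values of $\chi$ sums to $0$), so by partial summation $L(s,\chi)$ extends holomorphically to $\{\Re(s)>0\}$; hence $\log L(s,\chi)$ stays bounded as $s\to1^{+}$ \emph{provided $L(1,\chi)\ne0$}. Granting this for all non-principal $\chi$, the pole term dominates, the displayed sum diverges, and the theorem follows.

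The crux — and the step I expect to be the main obstacle — is the non-vanishing $L(1,\chi)\ne0$ for every non-principal $\chi$. I would deduce it from the behaviour of the product
$$
\zeta_{m}(s):=\prod_{\chi\in\widehat G}L(s,\chi).
$$
A computation with characters collapses the Euler product to $\zeta_{m}(s)=\prod_{q\nmid m}\bigl(1-q^{-f_{q}s}\bigr)^{-\phi(m)/f_{q}}$, where $f_{q}$ is the multiplicative order of $q$ modulo $m$ (so $f_{q}\mid\phi(m)$); expanding, $\zeta_{m}(s)=\sum_{j\ge1}c_{j}j^{-s}$ is a Dirichlet series with $c_{j}\ge0$, $c_{1}=1$, and $c_{q^{f_{q}}}=\phi(m)/f_{q}\ge1$ for each prime $q\nmid m$. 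Hence, at any real $s>0$ in its region of convergence, $\zeta_{m}(s)\ge\sum_{q\nmid m}q^{-f_{q}s}\ge\sum_{q\nmid m}q^{-\phi(m)s}$, and the last sum diverges once $s\le1/\phi(m)$; therefore the abscissa of convergence $\sigma_{0}$ of $\sum_{j}c_{j}j^{-s}$ satisfies $\sigma_{0}\ge1/\phi(m)>0$. On the other hand, by the analytic continuations above $\zeta_{m}$ is meromorphic on $\{\Re(s)>0\}$ with at worst a simple pole at $s=1$, coming only from the $\chi_{0}$-factor; so if $L(1,\chi_{1})=0$ for some non-principal $\chi_{1}$, that zero cancels the pole and $\zeta_{m}$ becomes holomorphic on all of $\{\Re(s)>0\}$ — contradicting Landau's theorem, which forces a Dirichlet series with non-negative coefficients to be singular at its real abscissa of convergence $\sigma_{0}\ge1/\phi(m)>0$. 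Thus $L(1,\chi)\ne0$ for every non-principal $\chi$, which closes the argument. This single argument covers real and complex characters alike; historically the delicate case of a real (quadratic) $\chi$ was instead treated via Dirichlet's class number formula, which exhibits $L(1,\chi)$ as a positive multiple of a class number.
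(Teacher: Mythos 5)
The paper states this as a classical background result and cites no proof, so there is no internal argument to compare yours against. Your proof is the standard analytic one via Dirichlet $L$-functions — orthogonality of characters to isolate the progression, the pole of $\zeta(s)$ feeding through $\chi_{0}$, analytic continuation of $L(s,\chi)$ to $\Re(s)>0$ for $\chi\neq\chi_{0}$ by bounded partial sums, and the key non-vanishing $L(1,\chi)\neq0$ derived from Landau's theorem applied to $\zeta_{m}(s)=\prod_{\chi}L(s,\chi)$, whose collapsed Euler product $\prod_{q\nmid m}(1-q^{-f_{q}s})^{-\phi(m)/f_{q}}$ has non-negative Dirichlet coefficients with real abscissa of convergence at least $1/\phi(m)>0$ — and each step is correct. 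One could note that for $m\in\{1,2\}$ one has $\phi(m)=1$, there are no non-principal characters, and $\zeta_m=L(s,\chi_0)$ does have its singularity at $s=1\ge 1/\phi(m)$, so the Landau argument produces no contradiction there; your explicit exclusion of these trivial cases handles this cleanly.
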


\begin{theorem}[Dirichlet]\label{2.6}
	For any collection $\alpha_{1},\dots,\alpha_{n}$ of rational numbers, there exists infinitely many prime numbers  $p$ such that $(p-1)\cdot\alpha_{i}\in\mathbb{N}$ for $1\leq i\leq n$.
\end{theorem}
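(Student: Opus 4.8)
The plan is to deduce this as an immediate corollary of Dirichlet's theorem on primes in arithmetic progressions (Theorem \ref{2.6.1}), after reducing the $n$ simultaneous integrality requirements to a single congruence condition on $p$.

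First I would dispose of the degenerate cases: if $\alpha_i = 0$ then $(p-1)\alpha_i \in \mathbb{N}$ holds for every $p$, so such indices can be discarded; and since the arithmetic content of the statement is integrality, one may assume each remaining $\alpha_i$ is positive (for a negative rational $\alpha_i$ one simply runs the argument with $|\alpha_i|$ and reads $\mathbb{Z}$ for $\mathbb{N}$). Write $\alpha_i = a_i/b_i$ with $a_i, b_i$ positive integers and $\gcd(a_i,b_i) = 1$. Then $(p-1)\alpha_i = (p-1)a_i/b_i$ lies in $\mathbb{N}$ precisely when $b_i \mid (p-1)a_i$, and since $\gcd(a_i,b_i)=1$ this is equivalent to $b_i \mid p-1$. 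Thus the whole collection of conditions amounts to requiring that $b_i \mid p-1$ for $1 \leq i \leq n$.

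Next, set $m = \operatorname{lcm}(b_1, \dots, b_n)$. The conditions $b_i \mid p-1$ for all $i$ hold simultaneously if and only if $m \mid p-1$, i.e. $p \equiv 1 \pmod{m}$. Since $\gcd(1,m) = 1$, Theorem \ref{2.6.1}, applied with residue $1$ and modulus $m$, yields infinitely many primes $p$ with $p \equiv 1 \pmod{m}$; each such $p$ satisfies $(p-1)\alpha_i \in \mathbb{N}$ for every $i$, which is the claim.

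There is essentially no obstacle: the statement is a formal consequence of Theorem \ref{2.6.1}, which is used here as a black box. The only steps demanding a line of justification are the coprimality reduction $b_i \mid (p-1)a_i \Leftrightarrow b_i \mid p-1$ and the replacement of finitely many divisibility conditions by divisibility by their least common multiple, both of which are elementary.
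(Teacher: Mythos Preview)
Your argument is correct: reducing to the single congruence $p\equiv 1\pmod{m}$ with $m=\operatorname{lcm}(b_1,\dots,b_n)$ and invoking Theorem~\ref{2.6.1} is exactly the standard derivation. The paper does not supply its own proof of Theorem~\ref{2.6}; it is stated as a classical result attributed to Dirichlet, so there is no alternative approach to compare against.
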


\subsection{Test ideals and F-thresholds}\label{prelim-section}

\begin{notation}
Throughout this subsection, $R$ denotes the polynomial ring $\mathbb{K}[x_{1},\dots,x_{n}]$ over  $\mathbb{K}$, and $\mathfrak{m}$ denotes its homogeneous maximal ideal $(x_{1},\dots,x_{n})$. 
\end{notation}

The Frobenius map $F:R\rightarrow R$ is the $p^{th}$ power morphism 
given by $r\mapsto r^{p}$.       
For every ideal $I\subseteq R$, the $e$-th Frobenius power of $I$, denoted by  $I^{\left[ p^{e}\right] }$,  is the ideal generated by the set $\left\lbrace g^{p^{e}} \, | \, g\in I \right\rbrace$. 
Since $R$ is a regular ring, then 
the Frobenius map $F:R\rightarrow R$ is flat. Therefore, we have that
$(I^{[p^{e}]}:_{R}J^{[p^{e}]})=(F^{e}(I):_{R}F^{e}(J))=F^{e}((I:_{R} J))=(I:_{R} J)^{[p^{e}]}.$
\begin{remark}\label{ret}
The ring $R$ is finitely generated and free over $R^{p^{e}}=\mathbb{K}[ x_{1}^{p^{e}},\dots,x_{n}^{p^{e}}]$, with basis the set of monomials $\left\lbrace \mu \, | \, \mu\not\in \mathfrak{m}^{[p^{e}]}\right\rbrace $.
\end{remark}

\begin{definition} \label{fraexpo}
 For an ideal $I
 \subseteq R$ and a positive integer $e$, let $I^{\left[ 1/p^{e}\right] }$ denote the unique smallest ideal $J$ of $R$ with respect to the inclusion such that  
$I \subseteq J^{[p^e]}.$
 \end{definition}
 
 \begin{proposition}[{\cite[Proposition 2.5]{BMS-TAMS}}]\label{compute Frobeniusroot}
 Let $e_{1},\dots,e_{s}$ be a basis of $R$ over $R^{p^{e}}$. Let $f_1, \dots, f_r$ be generators of the ideal $I$ of $R$. For each $i \in \{1,  \dots, s\}$  let $f_{i}=\sum^{s}_{j=1}a_{i,j}^{p^{e}}e_{j}$ with $a_{i,j} \in R$. Then,
$I^{\left[ 1/p^{e}\right] }= (a_{i,j} \, | \,  1 \leq i \leq r, 1 \leq j \leq s).$
\end{proposition}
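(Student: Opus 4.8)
My plan is to exhibit the ideal $J_0=(a_{i,j}\mid 1\le i\le r,\ 1\le j\le s)$ and verify that it satisfies the property defining $I^{[1/p^e]}$ in Definition~\ref{fraexpo}: namely that $I\subseteq J_0^{[p^e]}$ and that $J_0\subseteq J$ for every ideal $J$ of $R$ with $I\subseteq J^{[p^e]}$. (Establishing the second statement also shows, for free, that a smallest such $J$ exists — it is $J_0$ — so there is nothing extra to check about well-definedness.) Throughout I use that, in characteristic $p$, $x\mapsto x^{p^e}$ is additive, that $R^{p^e}$ is then exactly the set of $p^e$-th powers, and that $F^e$ is injective because $R$ is a domain; also that $J^{[p^e]}=(g_l^{p^e}\mid l)$ whenever $J=(g_1,\dots,g_t)$.

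The first inclusion is immediate: from $f_i=\sum_{j=1}^s a_{i,j}^{p^e}e_j$ each generator $f_i$ of $I$ is an $R$-linear combination of the elements $a_{i,j}^{p^e}\in J_0^{[p^e]}$, hence $f_i\in J_0^{[p^e]}$ and $I\subseteq J_0^{[p^e]}$.

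For the second inclusion I would use that $R$ is free over $R^{p^e}$ on the basis $e_1,\dots,e_s$: every $c\in R$ has a unique expansion $c=\sum_j c_j e_j$ with $c_j\in R^{p^e}$, and writing $c_j=b_j^{p^e}$ with $b_j\in R$ (unique by injectivity of $F^e$) we may think of this as expanding $c$ in the $e_j$ with "coefficients" $b_j$. Now fix generators $g_1,\dots,g_t$ of an ideal $J$ with $I\subseteq J^{[p^e]}$. For each $i$ choose $c_{i,l}\in R$ with $f_i=\sum_l c_{i,l}g_l^{p^e}$, expand $c_{i,l}=\sum_j b_{i,l,j}^{p^e}e_j$, and regroup using additivity of $x\mapsto x^{p^e}$:
$$f_i=\sum_l\sum_j (b_{i,l,j}g_l)^{p^e}e_j=\sum_j\Bigl(\sum_l b_{i,l,j}g_l\Bigr)^{p^e}e_j.$$
Comparing with $f_i=\sum_j a_{i,j}^{p^e}e_j$ and using uniqueness of the coordinates relative to $\{e_j\}$ together with injectivity of $F^e$, I read off $a_{i,j}=\sum_l b_{i,l,j}g_l\in J$ for all $i,j$, hence $J_0\subseteq J$.

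The only step needing a little care is the regrouping in the displayed line: expanding each coefficient $c_{i,l}$ in the basis, turning each product $c_{i,l}g_l^{p^e}$ into a sum of $p^e$-th powers times basis elements, collecting by the basis index $j$, and applying $\sum_l x_l^{p^e}=\bigl(\sum_l x_l\bigr)^{p^e}$ before comparing coordinates; everything else is formal. I do not anticipate a genuine obstacle — the content of the statement is precisely that freeness of $R$ over $R^{p^e}$ lets one compute the Frobenius root coordinate-wise in any basis.
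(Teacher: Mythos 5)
Your proof is correct, and it is the standard argument (essentially the one in the cited reference [BMS-TAMS, Prop.\ 2.5]); the paper itself does not reprove this proposition but only cites it. Both directions are handled as one should: the inclusion $I\subseteq J_0^{[p^e]}$ is immediate from the defining expansion of the $f_i$, and the minimality of $J_0$ follows by expanding the coefficients $c_{i,l}$ in the free basis, regrouping via additivity of $F^e$, and then invoking uniqueness of basis coordinates together with injectivity of Frobenius on the domain $R$ to read off $a_{i,j}=\sum_l b_{i,l,j}g_l\in J$.
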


\begin{definition}[{\cite{HY,BMS}}]\label{testid}
 Let $I\subseteq R$ be an ideal, $c$ a positive real number, and $e$ a positive integer. Then, the test ideal of $I$ with exponent $c$ is 
$\tau\left( I^{c}\right)=\bigcup_{e\geq 1} \left( I^{\lceil cp^{e}\rceil}\right)^{\left[ 1/p^{e}\right] }. $
\end{definition}

\begin{remark}\label{2.7}
Let $I$ be an ideal of $R$. Then, $f^{p^{e}}\in I^{\left[ p^{e}\right] }$ if and only if $f\in I$. This follows because $R$ is finitely generated and free over $R^{p^{e}}$. 
\end{remark}

If $\alpha$ is a rational number with a power of $p$ in the denominators, the test ideal can be obtained from the following formula.

\begin{proposition}[{\cite[Lemma 2.1]{BMS}}]\label{PropTestIdealPpower}
Let $r,e\in\NN$. Then, $\tau(f^{r/p^e})=  \left( f^{r}\right)^{\left[ 1/p^{e}\right] }.$
\end{proposition}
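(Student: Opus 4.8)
The plan is to unwind Definition \ref{testid} for the principal ideal $(f)$ and reduce to two elementary properties of Frobenius roots in the regular ring $R$. The first is monotonicity: $I\subseteq I'$ implies $I^{[1/p^{j}]}\subseteq (I')^{[1/p^{j}]}$, which is immediate from the minimality built into Definition \ref{fraexpo}. The second is the cancellation identity
$$\bigl(I^{[p^{k}]}\bigr)^{[1/p^{k+l}]}=I^{[1/p^{l}]}\qquad (k,l\ge 0),$$
which for $I=(f)$ reads $\bigl(f^{mp^{k}}\bigr)^{[1/p^{k+l}]}=\bigl(f^{m}\bigr)^{[1/p^{l}]}$ for every $m\ge 0$. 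To prove it, observe that by Definition \ref{fraexpo} the left-hand side is the smallest ideal $J$ with $I^{[p^{k}]}\subseteq J^{[p^{k+l}]}=\bigl(J^{[p^{l}]}\bigr)^{[p^{k}]}$; since the Frobenius makes $R$ a faithfully flat $R$-module (it is even finite and free, cf. Remark \ref{ret}), one has $A^{[p^{k}]}\subseteq B^{[p^{k}]}\iff A\subseteq B$ — the same kind of consequence of flatness already recorded before Remark \ref{ret} — so the condition on $J$ is equivalent to $I\subseteq J^{[p^{l}]}$, whose smallest solution is $I^{[1/p^{l}]}$ by definition.

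With these in hand, write $\tau\!\left(f^{r/p^{e}}\right)=\bigcup_{e'\ge 1}\bigl(f^{\,\lceil rp^{e'}/p^{e}\rceil}\bigr)^{[1/p^{e'}]}$ from Definition \ref{testid}, and prove the two containments against $(f^{r})^{[1/p^{e}]}$. For $\supseteq$: if $e\ge 1$, the index $e'=e$ contributes exactly $\bigl(f^{r}\bigr)^{[1/p^{e}]}$, while if $e=0$ every term equals $\bigl(f^{rp^{e'}}\bigr)^{[1/p^{e'}]}=(f^{r})$ by the cancellation identity; in either case the union contains $(f^{r})^{[1/p^{e}]}$. For $\subseteq$: fix $e'\ge 1$ and put $M=\max(e,e')$. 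From $\lceil rp^{e'}/p^{e}\rceil\ge rp^{e'}/p^{e}$ we get $\lceil rp^{e'}/p^{e}\rceil\cdot p^{M-e'}\ge rp^{M-e}$, hence
$$\bigl(f^{\,\lceil rp^{e'}/p^{e}\rceil}\bigr)^{[p^{M-e'}]}=\bigl(f^{\,\lceil rp^{e'}/p^{e}\rceil\, p^{M-e'}}\bigr)\subseteq\bigl(f^{\,rp^{M-e}}\bigr)=\bigl(f^{r}\bigr)^{[p^{M-e}]}.$$
Applying $(-)^{[1/p^{M}]}$, which preserves inclusions by monotonicity, and simplifying each side with the cancellation identity, we obtain $\bigl(f^{\,\lceil rp^{e'}/p^{e}\rceil}\bigr)^{[1/p^{e'}]}\subseteq (f^{r})^{[1/p^{e}]}$. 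Taking the union over $e'$ gives $\subseteq$, and therefore equality.

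The only point that needs care is the bookkeeping with ceilings and powers of $p$ — in particular passing to the common exponent $M=\max(e,e')$ so that $M-e$ and $M-e'$ are both nonnegative and the cancellation identity applies verbatim; there is no deeper obstacle, as all the ring-theoretic input (flatness of the Frobenius, existence and computation of Frobenius roots) is already available from Section \ref{prelim-section}. Alternatively, one could first check that the terms $\bigl(f^{\,\lceil rp^{e'}/p^{e}\rceil}\bigr)^{[1/p^{e'}]}$ form a nondecreasing chain in $e'$ — the standard reason the union in Definition \ref{testid} stabilizes — and then observe that this chain is constant, equal to $(f^{r})^{[1/p^{e}]}$, once $e'\ge e$.
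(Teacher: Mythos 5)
The paper does not give a proof of this proposition: it is quoted verbatim from \cite[Lemma 2.1]{BMS}, so there is no in-paper argument to compare against. Your proof is correct and self-contained, and it follows the standard route (essentially that of \cite{BMS} itself): monotonicity of Frobenius roots and the cancellation identity $\bigl(I^{[p^{k}]}\bigr)^{[1/p^{k+l}]}=I^{[1/p^{l}]}$ are both immediate from Definition \ref{fraexpo} together with the fact that $A^{[p^{k}]}\subseteq B^{[p^{k}]}\iff A\subseteq B$ (a consequence of Remark \ref{ret}/Remark \ref{2.7}); then the union in Definition \ref{testid} contains $(f^{r})^{[1/p^{e}]}$ by taking $e'=e$, and each term is contained in $(f^{r})^{[1/p^{e}]}$ by passing to the common level $M=\max(e,e')$ and using $\lceil rp^{e'}/p^{e}\rceil p^{M-e'}\ge rp^{M-e}$. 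The alternative you sketch at the end — observing that the terms form a nondecreasing chain which stabilizes at $(f^{r})^{[1/p^{e}]}$ once $e'\ge e$ — is the phrasing used in \cite{BMS}, so both of your routes land on the cited argument.
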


\begin{definition}\label{fpt}
	The $F$-threshold of a non-zero proper ideal $\mathfrak{a} \subseteq\sqrt{J}\subseteq \mathfrak{m}$  with respect to an ideal $J$, denoted by ${\bf c}^{J}(\mathfrak{a})$, is defined as 
$$
	\displaystyle\lim_{e\rightarrow \infty}\frac{\nu_{\mathfrak{a}}^{J}(p^{e})}{p^{e}},
$$
	where $\nu_{\mathfrak{a}}^{J}(p^{e})=	\max\left\lbrace l\in\mathbb{N}|\mathfrak{a}^{l}\not\subseteq J^{[p^{e}]}\right\rbrace. $
\end{definition}

It is known that the previous limit exits \cite[Lemma 1.1]{MTW}, and  ${\bf c}^{J}(\mathfrak{a})$ is a rational number \cite[Theorem 3.1]{BMS}. If $\mathfrak{a}=\left(f\right)$, we simply write $\nu_{f}^{J}(p^{e})$, ${\bf c}^{J}(f)$ and $\tau\left( f^{c}\right)$.

\begin{proposition}[{\cite[Proposition 1.9]{MTW}}]\label{tra}
Let $J\subseteq \mathfrak{m}$ be an ideal whose radical contains $f\not= 0$. For every non-negative integer $e$,  we have 
$$
\left\langle {\bf c}^{J}(f)\right\rangle _{e}=\frac{\nu_{f}^{J}(p^{e})}{p^{e}}
$$
\end{proposition}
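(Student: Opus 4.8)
The plan is to identify $\nu^{J}_{f}(p^{e})$ exactly and then translate through the base $p$ bookkeeping of Lemma \ref{2.4}. Write $\nu_{e}:=\nu^{J}_{f}(p^{e})$ and $c:={\bf c}^{J}(f)=\lim_{e\to\infty}\nu_{e}/p^{e}$ (the limit exists by \cite[Lemma 1.1]{MTW}, and $\nu_{e}$ is finite since $f\in\sqrt{J}$). I will prove
\[
\nu_{e}<p^{e}c\le \nu_{e}+1\qquad (e\ge 1).
\]
Granting this, $\lceil p^{e}c\rceil=\nu_{e}+1$, so Lemma \ref{2.4}(2) gives $p^{e}\langle c\rangle_{e}=\lceil p^{e}c\rceil-1=\nu_{e}$, i.e. $\langle c\rangle_{e}=\nu_{e}/p^{e}$, which is the assertion.

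The right-hand inequality is the easy, standard part. One has $p\nu_{e}\le\nu_{e+1}\le p\nu_{e}+(p-1)$ (this is the estimate underlying \cite[Lemma 1.1]{MTW}): for the lower bound, $f^{\nu_{e}}\notin J^{[p^{e}]}$ implies $(f^{\nu_{e}})^{p}\notin(J^{[p^{e}]})^{[p]}=J^{[p^{e+1}]}$ by Remark \ref{2.7}; for the upper bound, if $l\ge p(\nu_{e}+1)$ then $f^{l}$ is a multiple of $(f^{\nu_{e}+1})^{p}\in(J^{[p^{e}]})^{[p]}=J^{[p^{e+1}]}$, so $f^{l}\in J^{[p^{e+1}]}$. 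Iterating yields $p^{k}\nu_{e}\le\nu_{e+k}\le(\nu_{e}+1)p^{k}-1$. In particular $\nu_{e}/p^{e}$ is non-decreasing, so $\nu_{e}/p^{e}\le c$, and $\nu_{e+k}/p^{e+k}<(\nu_{e}+1)/p^{e}$ for every $k$, so letting $k\to\infty$ gives $c\le(\nu_{e}+1)/p^{e}$.

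The crux is the \emph{strict} inequality $\nu_{e}/p^{e}<c$; this is precisely what forces a ceiling rather than a floor, and is the reason the non-terminating expansion is the right normalization. Suppose $\nu_{e}/p^{e}=c$ for some $e$. Since $\nu_{\bullet}/p^{\bullet}$ is non-decreasing with limit $c$, it is then constant from $e$ on, so $\nu_{e+k}=p^{k}\nu_{e}$ for all $k\ge0$. By definition of $\nu_{e+k}$ this gives $f^{p^{k}\nu_{e}+1}\in J^{[p^{e+k}]}=(J^{[p^{e}]})^{[p^{k}]}$ for every $k$; writing $g:=f^{\nu_{e}}$, this reads $g^{p^{k}}f\in(J^{[p^{e}]})^{[p^{k}]}$, i.e.
\[
f\in\big((J^{[p^{e}]})^{[p^{k}]}:_{R}g^{p^{k}}\big)=(J^{[p^{e}]}:_{R}g)^{[p^{k}]},
\]
the last equality being the identity $(I^{[p^{k}]}:_{R}J^{[p^{k}]})=(I:_{R}J)^{[p^{k}]}$ coming from flatness of Frobenius. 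But $g=f^{\nu_{e}}\notin J^{[p^{e}]}$ (as $\nu_{e}$ is the largest exponent with that property), so $(J^{[p^{e}]}:_{R}g)$ is a proper ideal of $R$ and hence is contained in some maximal ideal $\n$; therefore $f\in\bigcap_{k\ge0}(J^{[p^{e}]}:_{R}g)^{[p^{k}]}\subseteq\bigcap_{k\ge0}\n^{p^{k}}=0$, contradicting $f\ne0$. Thus $\nu_{e}<p^{e}c$, which completes the pair of inequalities and the proof. The only genuinely delicate step is this last Krull-intersection argument; everything else is the monotonicity estimate together with the truncation identities of Lemma \ref{2.4}. (If one allows ${\bf c}^{J}(f)>1$, one reads $\langle-\rangle_{e}$ in Definition \ref{2.3} with its evident extension to positive reals, retaining the integer part; the argument is unchanged.)
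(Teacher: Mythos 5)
The paper cites \cite[Proposition 1.9]{MTW} without reproving it, so there is no in-paper argument to compare against; your proof is a correct, self-contained derivation. The reduction to the two-sided bound $\nu_{e}<p^{e}c\le\nu_{e}+1$ and then to $\lceil p^{e}c\rceil=\nu_e+1$ via Lemma~\ref{2.4}(2) is exactly the right reformulation, the monotonicity estimates $p^{k}\nu_{e}\le\nu_{e+k}\le(\nu_{e}+1)p^{k}-1$ give the weak inequalities, and the strict left inequality is correctly handled by the colon-ideal/Frobenius-flatness step together with $(J^{[p^e]}:g)^{[p^k]}\subseteq\n^{[p^k]}\subseteq\n^{p^k}$ and Krull's intersection theorem, forcing $f=0$. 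The one caveat, which you already note, is that Lemma~\ref{2.4} as stated restricts to $\alpha\in(0,1]$, so when ${\bf c}^{J}(f)>1$ one must use the evident extension of $\langle\,\cdot\,\rangle_{e}$; with that understanding the argument is complete.
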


\section{F-thresholds of a Thom-Sebastiani type polynomial}\label{4}
In this section we focus in stuying the $F$-threshold of  a Thom-Sebastiani type polynomial with respect to the sum of two ideals in different variables. In particular, we prove Theorem \ref{A}.  We start fixing notation for this section.

\begin{notation}\label{NotationFT}
 Let $\{x_1, \dots, x_n\}$ and $\{ y_1, \dots, y_m\}$ be two disjoint sets of variables. Consider the rings of polynomials $R_{1}=\KK[x_{1},\dots,x_{n}]$ and $R_{2}=\KK[y_{1},\dots,y_{m}]$. Let $\mathfrak{m}_{1}=\left(x_{1},\dots,x_{n}\right)$ and $\mathfrak{m}_{2}=\left( y_{1},\dots,y_{m} \right)$  denote their maximal homogeneous ideals.
Let $I_1\subseteq \m_1$ and $I_2\subseteq \m_2$ be two ideals. 
Let $a_{1}={\bf c}^{I_1}(g_1)$ and  $a_{2}={\bf c}^{I_2}(g_2)$.  Let $f=g_1+g_2 \in R=R_1 \otimes_\KK R_2=\KK[x_1, \dots, x_n, y_1, \dots, y_m]$, where  $g_1\in R_{1}$ and $g_2\in R_{2}$.
\end{notation}

We start with a lemma that may be know to the experts. We add it for the sake of completeness.

\begin{lemma}\label{LemmaPorwerAndTerms}
We consider Notation \ref{NotationFT}.
Let $\theta, e\in\NN$. Then,  $f^\theta\in I^{[p^e]}_1R +  I^{[p^e]}_2 R$
 if and only if
$\binom{\theta}{j}=0, $ $g_1^j\in I^{[p^e]}_1 $, or $g_2^{\theta-j}\in I^{[p^e]}_2$ for every 
$0\leq j\leq \theta$.
\end{lemma}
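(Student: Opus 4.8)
The plan is to expand $f^\theta = (g_1+g_2)^\theta$ using the binomial theorem: since $g_1 \in R_1$ and $g_2 \in R_2$ live in disjoint sets of variables, we have $f^\theta = \sum_{j=0}^\theta \binom{\theta}{j} g_1^j g_2^{\theta-j}$, and the monomials appearing in $g_1^j g_2^{\theta-j}$ for distinct values of $j$ involve genuinely different $x$-degrees (each term in $g_1^j$ has total $x$-degree at least $j$ since $g_1 \in \mathfrak{m}_1$, but more precisely one should track that the bidegree in $(x\text{-variables}, y\text{-variables})$ separates the summands). The key structural point is that $I_1^{[p^e]}R + I_2^{[p^e]}R$ is a "bihomogeneous-friendly" ideal in the sense that membership can be tested summand by summand once we have a direct sum decomposition of $R$ compatible with it.

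First I would make precise the direct sum decomposition. Write $R = R_1 \otimes_\KK R_2$ and note $R/(I_1^{[p^e]}R + I_2^{[p^e]}R) \cong (R_1/I_1^{[p^e]}) \otimes_\KK (R_2/I_2^{[p^e]})$. So $f^\theta \in I_1^{[p^e]}R + I_2^{[p^e]}R$ if and only if the image of $\sum_j \binom{\theta}{j} g_1^j g_2^{\theta-j}$ vanishes in this tensor product. Now I would choose a $\KK$-basis of $R_1/I_1^{[p^e]}$ and a $\KK$-basis of $R_2/I_2^{[p^e]}$; their pairwise products form a basis of the tensor product. The element $\bar g_1^{\,j} \otimes \bar g_2^{\,\theta-j}$ is zero in the tensor product exactly when $\bar g_1^{\,j} = 0$ or $\bar g_2^{\,\theta-j} = 0$, i.e. $g_1^j \in I_1^{[p^e]}$ or $g_2^{\theta-j} \in I_2^{[p^e]}$. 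The subtle point is that the various $\bar g_1^{\,j} \otimes \bar g_2^{\,\theta-j}$ for different $j$ could in principle cancel against each other even if individually nonzero; I would rule this out by observing that $\bar g_1^{\,j}$ lies in the degree-$\geq j$ part with respect to the standard grading on $R_1$ (inherited mod $I_1^{[p^e]}$ — one has to be careful that $I_1^{[p^e]}$ is homogeneous, which it is since $I_1 \subseteq \mathfrak m_1$ can be taken homogeneous, or more robustly argue via a filtration by $x$-degree), so the summand indexed by $j$ sits in the $\mathfrak m_1^j/\mathfrak m_1^{j+1}$-graded layer tensored against $R_2/I_2^{[p^e]}$, and these layers are in direct sum. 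Hence there is no cross-cancellation, and $f^\theta \in I_1^{[p^e]}R + I_2^{[p^e]}R$ iff each summand $\binom{\theta}{j} g_1^j g_2^{\theta-j}$ individually lies there, which is exactly the stated condition.

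Alternatively, and perhaps more cleanly for a write-up, I would avoid the grading subtlety by using the $R_2^{p^e}$-module structure: write each $g_1^j$ and $g_2^{\theta-j}$ in terms of the monomial basis $\{\mu \mid \mu \notin \mathfrak m^{[p^e]}\}$ of $R$ over $R^{p^e}$ (Remark \ref{ret}), split into the $x$-part and $y$-part of each monomial, and track the Frobenius-root computation of Proposition \ref{compute Frobeniusroot}. Then $f^\theta \in I_1^{[p^e]}R + I_2^{[p^e]}R$ translates into a statement about the coefficients, and the disjointness of the variable sets guarantees that contributions from different $j$ land in different $R^{p^e}$-basis slots. Either way, the forward implication (membership of each summand implies membership of $f^\theta$) is trivial; the content is the converse.

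The main obstacle I anticipate is exactly the no-cross-cancellation argument: one must justify that the summands $\binom{\theta}{j} g_1^{\,j} g_2^{\,\theta-j}$ occupy "independent slots" so that their sum lies in $I_1^{[p^e]}R + I_2^{[p^e]}R$ only if each does. The cleanest justification is the tensor-product decomposition $R/(I_1^{[p^e]}R+I_2^{[p^e]}R) \cong R_1/I_1^{[p^e]} \otimes_\KK R_2/I_2^{[p^e]}$ combined with the $\mathfrak m_1$-adic filtration on the left factor, since $g_1 \in \mathfrak m_1$ forces $g_1^{\,j} \in \mathfrak m_1^{\,j}$ and the associated graded pieces are linearly disjoint. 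I would spell this out carefully, noting that we may assume $I_1, I_2$ homogeneous (or pass to $\mathfrak m_1^{[p^e]}, \mathfrak m_2^{[p^e]}$ as an outer bound and then refine), and everything else is routine binomial bookkeeping.
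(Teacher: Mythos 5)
Your global strategy --- reduce to the tensor-product decomposition $R/(I_1^{[p^e]}R + I_2^{[p^e]}R) \cong (R_1/I_1^{[p^e]}) \otimes_\KK (R_2/I_2^{[p^e]})$, expand $f^\theta$, and rule out cross-cancellation among the summands $\binom{\theta}{j}\bar{g}_1^{\,j}\otimes\bar{g}_2^{\,\theta-j}$ --- is reasonable, and the forward implication is indeed trivial. The gap is in your justification of no-cross-cancellation. You cannot ``assume $I_1,I_2$ homogeneous'': the lemma is for arbitrary ideals in $\m_1,\m_2$, and replacing, say, $I_1=(x^2-x^3)$ by a homogeneous ideal changes $I_1^{[p^e]}$ and with it the statement. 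Even granting homogeneity, the claim that ``the summand indexed by $j$ sits in the $\m_1^j/\m_1^{j+1}$-graded layer'' is not correct: $g_1^j\in\m_1^j$ spreads across many degrees, the relevant leading degree is $j\cdot\ord(g_1)$ rather than $j$ when $g_1\in\m_1^2$, and, more seriously, the initial form of $g_1^j$ in $R_1$ can die modulo $I_1^{[p^e]}$ even when $\bar{g}_1^{\,j}\neq 0$, so the order of $\bar{g}_1^{\,j}$ in the filtered quotient need not separate distinct $j$. Your alternative ``basis-slots'' sketch has the same flaw: the monomials of $g_1^j$ for different $j$ are not disjoint in general, so distinct $j$ do not land in distinct slots.

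The clean way to rescue your route is to drop the grading and use nilpotency. Since $g_1\in\sqrt{I_1}$, some $g_1^k\in I_1$, hence $g_1^{kp^e}\in I_1^{[p^e]}$ and $\bar{g}_1$ is nilpotent in $A:=R_1/I_1^{[p^e]}$; the nonzero powers $1,\bar{g}_1,\dots,\bar{g}_1^{\,\beta}$ (with $\beta=\nu_{g_1}^{I_1}(p^e)$) of a nilpotent element of a commutative $\KK$-algebra are automatically $\KK$-linearly independent, since a nontrivial relation factors as $\bar{g}_1^{\,i_0}\cdot(\text{unit}+\text{nilpotent})=0$ and forces $\bar{g}_1^{\,i_0}=0$ with $i_0\le\beta$. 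The same holds for $\bar{g}_2$ in $B$, and the pure tensors $\bar{g}_1^{\,j}\otimes\bar{g}_2^{\,\theta-j}$ with $j\le\beta$, $\theta-j\le\nu_{g_2}^{I_2}(p^e)$ are then linearly independent in $A\otimes_\KK B$, which is exactly the no-cross-cancellation you need. The paper's proof instead avoids multi-term linear independence entirely: it takes the minimal ``bad'' index $t$, subtracts the terms with $j<t$ (in the ideal by minimality), multiplies by $g_1^{\beta-t}$ so that every term with $j>t$ picks up a factor $g_1^{>\beta}\in I_1^{[p^e]}$ and drops out, and so reduces to showing that the single pure tensor $\bar{g}_1^{\,\beta}\otimes\bar{g}_2^{\,\theta-t}$ is nonzero, which is immediate for a tensor of nonzero vectors over a field. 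That argument is tighter and sidesteps the subtlety you ran into.
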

\begin{proof}
We first assume that $\binom{\theta}{j}=0, $ $g_1^j\in I^{[p^e]}_1 $, or $g_2^{\theta-j}\in I^{[p^e]}_2$ for every 
$0\leq j\leq \theta$. Then, $f^\theta=\sum^\theta_{j=0}\binom{\theta}{j} g_1^j g_2^{\theta-j}\in I^{[p^e]}_1R +  I^{[p^e]}_2 R$.

We now assume that $f^\theta\in I^{[p^e]}_1R +  I^{[p^e]}_2 R$.
We proceed by contradiction.
We assume that the set
$$\mathcal{A}:=\left\{ j \in\NN \; |\; \binom{\theta}{j}\neq 0, g_1^j\not\in I^{[p^e]}_1 , g_2^{\theta-j}\not\in I^{[p^e]}_2, \hbox{ and } j \leq \theta \right\}$$
is not empty. Set $t:= \min \mathcal{A}$.

By our choice of $t$, we have that $\sum_{j=0}^{t-1} \binom{\theta}{j} g^j_1 g^{\theta-j}_2 \in I^{[p^e]}_1R +  I^{[p^e]}_2 R$, and therefore 
$$h= f - \sum_{j=0}^{t-1} \binom{\theta}{j} g^j_1 g^{\theta-j}_2 = \sum^\theta_{j=t}\binom{\theta}{j} g^j_1 g^{\theta-j}_2 \in I^{[p^e]}_1R +  I^{[p^e]}_2 R.$$
Let $\beta=\nu^{I_1}_{g_1}(p^e)$. 
We have that $t\leq \beta$ and
$$ g^{\beta-t}_{1}h=\sum^\theta_{j=t}\binom{\theta}{j} g^{\beta+j-t}_1 g^{\theta-j}_2
\in  I^{[p^e]}_1R +  I^{[p^e]}_2 R.$$
Moreover, $\binom{\theta}{j} g^{\beta+j-t}_1 g^{\theta-j}_2\in    I^{[p^e]}_1R +  I^{[p^e]}_2 R$ for every $j>t$.
Then, $\binom{\theta}{t} g^{\beta}_1 g^{\theta-t}_2 \in  I^{[p^e]}_1R +  I^{[p^e]}_2 R$.
Since $\binom{\theta}{t} \neq 0$, we deduce that $ g^{\beta}_1 g^{\theta-t}_2 \in  I^{[p^e]}_1R +  I^{[p^e]}_2 R.$

Since
$g_1^\beta\not\in I^{[p^e]}_1$  and $ g_2^{\theta-t}\not\in I^{[p^e]}_2$, we have that
$g^{\beta}_1\neq 0$  in $R_1/I^{[p^e]}_1$ and
$ g^{\theta-t}_2\neq 0 $ in $R_2/I^{[p^e]}_2$. Then,
$0\neq g^{\beta}_1\otimes  g^{\theta-t}_2$ in 
$R_1/I^{[p^e]}_1\otimes_\KK R_2/I^{[p^e]}_2=R/( I^{[p^e]}_1R +  I^{[p^e]}_2 R).$
Hence,
 $ g^{\beta}_1 g^{\theta-t}_2 \not \in  I^{[p^e]}_1R +  I^{[p^e]}_2 R,$ a contradiction.
\end{proof}

We start proving Theorem \ref{A} by finding a lower bound for $F$-thresholds of a  Thom-Sebastiani type polynomial with respect to an ideal that has also  Thom-Sebastiani type shape.

\begin{lemma}\label{4.1}
We consider Notation \ref{NotationFT}.
Then,
    $$
	{\bf c}^{I_1+I_2}(g_1+g_2)\geq \left\langle a_{1}\right\rangle_{L} + \left\langle a_{2}\right\rangle_{L} +\frac{1}{p^{L}},
	$$
	where $L=\sup\left\lbrace N\in \mathbb{N}\quad|\quad a_{1}^{(e)}+a_{2}^{(e)}\leq p-1\quad\text{for all}\quad 0\leq e\leq N \right\rbrace.$
\end{lemma}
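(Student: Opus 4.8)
The goal is a lower bound for the $F$-threshold, so by the definition of $\mathbf{c}^{I_1+I_2}$ via $\nu$-invariants it suffices to produce, for each $e$, an explicit power $\theta_e$ of $f=g_1+g_2$ that is \emph{not} contained in $(I_1+I_2)^{[p^e]}=I_1^{[p^e]}R+I_2^{[p^e]}R$, and to choose $\theta_e$ so that $\theta_e/p^e$ converges to the claimed quantity $\langle a_1\rangle_L+\langle a_2\rangle_L+1/p^L$. By Proposition \ref{tra} we have $p^e\langle a_i\rangle_e=\nu^{I_i}_{g_i}(p^e)$, so $g_i^{\,p^e\langle a_i\rangle_e}\notin I_i^{[p^e]}$ while $g_i^{\,p^e\langle a_i\rangle_e+1}\in I_i^{[p^e]}$. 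The natural candidate is therefore
$$\theta_e=p^e\langle a_1\rangle_e+p^e\langle a_2\rangle_e+1,$$
whose value divided by $p^e$ tends to $a_1+a_2$ if $L=\infty$ and, if $L<\infty$, stabilizes for $e\ge L$ at $\langle a_1\rangle_L+\langle a_2\rangle_L+p^{-L}$ precisely because $\langle a_i\rangle_e$ stops changing once the digits in the $(L{+}1)$st place and beyond no longer add without carrying — I will need to check this stabilization carefully, distinguishing the case where only finitely many later digits of one of the $a_i$ are relevant.

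\textbf{Main step: non-containment via Lemma \ref{LemmaPorwerAndTerms}.} Apply Lemma \ref{LemmaPorwerAndTerms} with $\theta=\theta_e$: to show $f^{\theta_e}\notin I_1^{[p^e]}R+I_2^{[p^e]}R$ it is enough to exhibit one index $j$, $0\le j\le\theta_e$, with $\binom{\theta_e}{j}\not\equiv 0\bmod p$, $g_1^j\notin I_1^{[p^e]}$, and $g_2^{\theta_e-j}\notin I_2^{[p^e]}$. Take $j=p^e\langle a_1\rangle_e$; then $\theta_e-j=p^e\langle a_2\rangle_e+1$. The first two conditions on $g_1$ and $g_2$ need the exponents to be below the $\nu$-thresholds: $g_1^{\,p^e\langle a_1\rangle_e}\notin I_1^{[p^e]}$ holds by definition of $\nu^{I_1}_{g_1}(p^e)=p^e\langle a_1\rangle_e$, but for $g_2$ we have exponent $p^e\langle a_2\rangle_e+1$, which is \emph{exactly} $\nu^{I_2}_{g_2}(p^e)+1$ — on the nose too large. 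So the naive choice fails at the boundary, and I must instead take $j=p^e\langle a_1\rangle_e$ paired against $g_2^{\,p^e\langle a_2\rangle_e}$, i.e.\ use $\theta_e'=p^e\langle a_1\rangle_e+p^e\langle a_2\rangle_e$ and then argue that $f^{\theta_e'}$ itself already lies outside the ideal, \emph{or} — and this is the real content — show that the extra $+1$ can be absorbed by choosing $j=p^e\langle a_1\rangle_e+1$ on one branch and $j=p^e\langle a_1\rangle_e$ on another. The key combinatorial input is Lemma \ref{2.5} (Lucas/Kummer): $\binom{\theta_e}{j}\not\equiv 0$ iff $j$ and $\theta_e-j$ add without carrying in base $p$; and since $e\le L$ forces $a_1^{(i)}+a_2^{(i)}\le p-1$ for $i\le e$, Lemma \ref{2.4}(3) gives that the base-$p$ digits of $p^e\langle a_1\rangle_e$ and $p^e\langle a_2\rangle_e$ add without carrying, so $\binom{p^e\langle a_1\rangle_e+p^e\langle a_2\rangle_e}{p^e\langle a_1\rangle_e}\not\equiv 0\bmod p$; adding $1$ to the total and shifting $j$ appropriately keeps a carry-free splitting available as long as not all digits of both summands are $p-1$, which is where the hypothesis defining $L$ (and the non-terminating expansion) is used.

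\textbf{Where the difficulty concentrates.} The routine parts are the limit computation and the reduction to a single power of $f$. The genuine obstacle is the boundary bookkeeping in the last paragraph: producing the "$+1/p^L$'' term. Concretely I expect to argue that $g_1^{\,p^e\langle a_1\rangle_e}g_2^{\,p^e\langle a_2\rangle_e}$ is a nonzero element of $R_1/I_1^{[p^e]}\otimes_\KK R_2/I_2^{[p^e]}$ (exactly the tensor-nonvanishing argument already used in the proof of Lemma \ref{LemmaPorwerAndTerms}), and that after multiplying $f^{\theta_e}$ by a suitable power of $g_1$ or $g_2$ — pushing $g_1$ up to its Frobenius threshold $\beta=\nu^{I_1}_{g_1}(p^e)$ as in that lemma — every term of the binomial expansion except one lands in $I_1^{[p^e]}R+I_2^{[p^e]}R$, leaving a single nonzero term $\binom{\theta_e}{t}g_1^\beta g_2^{\theta_e-t}$ with $\theta_e-t=p^e\langle a_2\rangle_e$ and $\binom{\theta_e}{t}\not\equiv 0$. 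This forces $\nu^{I_1+I_2}_f(p^e)\ge\theta_e$, hence $\langle\mathbf c^{I_1+I_2}(f)\rangle_e\ge\theta_e/p^e$ by Proposition \ref{tra}, and letting $e\to\infty$ (using $e\le L$ throughout, and that for $e\ge L$ the truncations have stabilized) yields the stated bound. I would present the $e\le L$ estimate as the core lemma and then take the limit.
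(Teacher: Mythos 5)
Your candidate exponent $\theta_e = p^e\langle a_1\rangle_e + p^e\langle a_2\rangle_e + 1$ cannot produce the lower bound; in fact $f^{\theta_e}$ lies \emph{inside} $(I_1+I_2)^{[p^e]}$ for every $e$. Indeed, for any split $j + (\theta_e - j) = \theta_e$: if $j \leq p^e\langle a_1\rangle_e = \nu^{I_1}_{g_1}(p^e)$ then $\theta_e - j \geq p^e\langle a_2\rangle_e + 1 > \nu^{I_2}_{g_2}(p^e)$, so $g_2^{\theta_e - j} \in I_2^{[p^e]}$; and if $j > \nu^{I_1}_{g_1}(p^e)$ then $g_1^{j} \in I_1^{[p^e]}$. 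So every index $j$ satisfies one of the three conditions in Lemma \ref{LemmaPorwerAndTerms}, and that lemma then places $f^{\theta_e}$ inside the ideal — exactly the opposite of what you need. The fallback $\theta'_e = p^e\langle a_1\rangle_e + p^e\langle a_2\rangle_e$ does escape the ideal for $e \leq L$, but it yields only $\mathbf{c}^{I_1+I_2}(f) \geq \langle a_1\rangle_L + \langle a_2\rangle_L$ and loses the crucial $\tfrac{1}{p^L}$, which is the entire content of the lemma. Your closing suggestion of multiplying $f^{\theta_e}$ by a power of $g_1$ to isolate $\binom{\theta_e}{t}g_1^{\beta}g_2^{\theta_e - t}$ with $\theta_e - t = p^e\langle a_2\rangle_e$ forces $t = \nu^{I_1}_{g_1}(p^e) + 1 > \beta$, so the would-be surviving term is already in $I_1^{[p^e]}R$; there is no $t$ for which both $g_1^{t}$ and $g_2^{\theta_e - t}$ avoid the respective Frobenius powers.

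The idea you are missing — and it is the real work in the paper's proof — is to \emph{not} use the truncations $\langle a_i\rangle_e$ themselves once $e > L$. Since $a_1^{(L+1)} + a_2^{(L+1)} \geq p$, one picks $\alpha_1 + \alpha_2 = p-1$ with $0 \leq \alpha_1 < a_1^{(L+1)}$ and $0 \leq \alpha_2 \leq a_2^{(L+1)}$, and for $e \geq L+2$ replaces the truncations by
$\eta_1(e) = \langle a_1\rangle_L + \tfrac{\alpha_1}{p^{L+1}} + \tfrac{p-1}{p^{L+2}} + \dots + \tfrac{p-1}{p^{e}}$
and
$\eta_2(e) = \langle a_2\rangle_L + \tfrac{\alpha_2}{p^{L+1}}$.
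These still satisfy $p^e\eta_i(e) \leq p^e\langle a_i\rangle_e = \nu^{I_i}_{g_i}(p^e)$, so $g_i^{p^e\eta_i(e)} \notin I_i^{[p^e]}$; but now, by construction, the integers $p^e\eta_1(e)$ and $p^e\eta_2(e)$ add without carrying in base $p$ (the digit at position $L+1$ is $\alpha_1 + \alpha_2 = p-1$, and below it $\eta_2$ contributes zero). Lucas (Lemma \ref{2.5}) and Lemma \ref{LemmaPorwerAndTerms} then give $f^{p^e(\eta_1(e)+\eta_2(e))} \notin (I_1+I_2)^{[p^e]}$, and as $e \to \infty$ the exponents divided by $p^e$ tend to $\langle a_1\rangle_L + \langle a_2\rangle_L + \tfrac{1}{p^L}$. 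Without this redistribution of digits past position $L$, the $+\tfrac{1}{p^L}$ is unobtainable from any single carry-free split of a fixed exponent.
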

\begin{proof}
We note that $a_{1}^{(L+1)}+a_{2}^{(L+1)}\geq p$, by our definition  of $L$. There exists $\alpha_{1},\alpha_{2}\in \mathbb{N}$ such that $\alpha_{1}+\alpha_{2}=p-1$ with $0\leq \alpha_{1}<a_{1}^{(L+1)}$ and $0\leq \alpha_{2}\leq a_{2}^{(L+1)}$. For any integer 
$e\geq L+2$, we  set 
\begin{align*}
\eta(e)=(\eta_{1}(e),\eta_{2}(e))&:=(\left\langle a_{1}\right\rangle_{L},\left\langle a_{2} \right\rangle_{L})+\left( \frac{\alpha_{1}}{p^{L+1}}+\frac{p-1}{p^{L+2}}+\dots+\frac{p-1}{p^{e}},\frac{\alpha_{2}}{p^{L+1}} \right),\\
&=(\left\langle a_{1}\right\rangle_{L},\left\langle a_{2} \right\rangle_{L})+\left(\frac{1}{p^{L+1}}\cdot\left(\alpha_{1}+\frac{p^{e-(L+1)}-1}{p^{e-(L+1)}}\right),\frac{\alpha_{2}}{p^{L+1}} \right).
\end{align*}
Thus, 
$\eta_{2}(e)\leq \left\langle a_{2} \right\rangle _{L+1}$ and $\eta_{1}(e)<\left\langle a_{1} \right\rangle _{L+1}$. The base $p$ expansions of the integers $p^{e}\eta_{1}(e)$ and $p^{e}\eta_{2}(e)$ are 
$$
p^{e}\eta_{1}(e)=p^{e-(L+1)}\alpha_{1}+p^{e-(L+2)}(p-1)+\dots+p-1,
\, \hbox{ and } \, 
p^{e}\eta_{2}(e)=p^{e-(L+1)}\alpha_{2}.
$$
Since $\alpha_{1}+\alpha_{2}=p-1$,  the integers $p^{e}\eta_{1}(e)$ and $p^{e}\eta_{2}(e)$ add without carrying in base $p$.  
Thus
\begin{equation}\label{13}
{p^{e}(\eta_{1}(e)+\eta_{2}(e))\choose
p^{e}\eta_{1}(e)}
\not\equiv 0\mod p
\end{equation}
by  Lemma \ref{2.5}.

Since $\eta_{1}(e)\leq \left\langle a_{1}\right\rangle_{e} $ and $\eta_{2}(e)\leq \left\langle a_{2}\right\rangle_{e} $ for every $e\geq L+2$, we have  that
$p^{e}\eta_{1}(e)\leq p^{e}\left\langle a_{1}\right\rangle_{e}=\nu_{g_1}^{I_1}(p^{e})$ and 
$p^{e}\eta_{2}(e)\leq p^{e}\left\langle a_{2}\right\rangle_{e}=\nu_{g_2}^{I_2}(p^{e})$.
Therefore,  $g_1^{p^{e}\eta_{1}(e)}\not\in I_1^{[p^{e}]}$, $g_2^{p^{e}\eta_{2}(e)}\not\in I_2^{[p^{e}]}$.
Thus,
$$
f^{p^{e}(\eta_{1}(e)+\eta_{2}(e))}\not\in(I_1+I_2)^{[p^{e}]}
$$
by  Lemma \ref{LemmaPorwerAndTerms}.
It follows that
$$p^{e}\left\langle{\bf c}^{I_1+I_2}(f) \right\rangle_{e} =\nu_{f}^{I_1+I_2}(p^{e}) \geq p^{e}(\eta_{1}(e)+\eta_{2}(e)).$$ Finally, we have
\begin{align*}\left\langle{\bf c}^{I_1+I_2}(f) \right\rangle_{e} \geq \eta_{1}(e)+\eta_{2}(e)
&= \left\langle a_{1}\right\rangle_{L}+\left\langle a_{2}\right\rangle_{L}+\frac{1}{p^{L+1}}\left(\alpha_{1}+\alpha_{2}  \right) +\frac{p-1}{p^{L+2}}+\dots+\frac{p-1}{p^{e}}\\
&=\left\langle a_{1}\right\rangle_{L}+\left\langle a_{2}\right\rangle_{L}+\frac{p-1}{p^{L+1}}+\frac{p-1}{p^{L+2}}+\dots+\frac{p-1}{p^{e}}.\\
\end{align*}
Taking the limit when $e\rightarrow \infty$, we have that
\begin{equation*}
{\bf c}^{I_1+I_2}(f) \geq  \left\langle a_{1}\right\rangle_{L}+\left\langle a_{2}\right\rangle_{L}+\frac{p-1}{p^{L+1}}\left(1+\frac{1}{p}+\frac{1}{p^{2}}+\dots  \right)
=\left\langle a_{1}\right\rangle_{L}+\left\langle a_{2}\right\rangle_{L}+\frac{1}{p^{L}}.\\
\end{equation*}
\end{proof}

\begin{theorem}\label{4.2}
Consider Notation \ref{NotationFT}.
 If  $a_{1}+a_{2}\leq 1$, then 
$$
	{\bf c}^{I_1+I_2}(f)=\left\lbrace \begin{array}{ll}
	a_{1}+a_{2},& \text{if}\quad L=\infty,\\
	&\\
	\left\langle a_{1}\right\rangle_{L} + \left\langle a_{2}\right\rangle_{L} +\frac{1}{p^{L}},& \text{if}\quad L<\infty,
	\end{array}\right.
	$$
	where $L=\sup\left\lbrace N\in \mathbb{N}\quad|\quad a_{1}^{(e)}+a_{2}^{(e)}\leq p-1\quad\text{for all}\quad 0\leq e\leq N \right\rbrace.$

\end{theorem}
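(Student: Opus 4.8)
The plan is to compute the integers $\nu_{f}^{I_1+I_2}(p^e)$ exactly for all sufficiently large $e$ and then pass to the limit, using ${\bf c}^{I_1+I_2}(f)=\lim_{e\to\infty}\nu_{f}^{I_1+I_2}(p^e)/p^e$. For each $e$, set $\beta_i:=\nu_{g_i}^{I_i}(p^e)=p^e\langle a_i\rangle_e=\sum_{k=1}^{e}a_i^{(k)}p^{e-k}$, the second equality being Proposition \ref{tra}; this is the integer whose base-$p$ digits, from the most significant down, are $a_i^{(1)},\dots,a_i^{(e)}$. Observe that $(I_1+I_2)^{[p^e]}=I_1^{[p^e]}R+I_2^{[p^e]}R$, that $g_i^{j}\notin I_i^{[p^e]}$ if and only if $j\le\beta_i$ (by the definition of $\nu_{g_i}^{I_i}$ and the fact that $I_i^{[p^e]}$ is an ideal), and that $\binom{\theta}{j}\not\equiv 0\pmod p$ if and only if $j$ and $\theta-j$ add without carrying in base $p$ (Lemma \ref{2.5}). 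Feeding these into Lemma \ref{LemmaPorwerAndTerms} gives the key identity
$$\nu_{f}^{I_1+I_2}(p^e)=\max\{\,j_1+j_2\ :\ 0\le j_1\le\beta_1,\ 0\le j_2\le\beta_2,\ j_1\text{ and }j_2\text{ add without carrying in base }p\,\}.$$

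Suppose first $L=\infty$. Then $a_1^{(k)}+a_2^{(k)}\le p-1$ for every $k$, so $\beta_1$ and $\beta_2$ add without carrying and the maximum is $\beta_1+\beta_2=p^e(\langle a_1\rangle_e+\langle a_2\rangle_e)$; dividing by $p^e$ and letting $e\to\infty$ gives ${\bf c}^{I_1+I_2}(f)=a_1+a_2$ (note $a_1+a_2=\sum_k(a_1^{(k)}+a_2^{(k)})p^{-k}\le 1$ here, so the hypothesis is automatic in this case). Now suppose $L<\infty$. The lower bound ${\bf c}^{I_1+I_2}(f)\ge\langle a_1\rangle_L+\langle a_2\rangle_L+p^{-L}$ is exactly Lemma \ref{4.1}, so it remains to prove the matching upper bound: for every $e\ge L+1$,
$$\nu_{f}^{I_1+I_2}(p^e)\ \le\ \sum_{k=1}^{L}(a_1^{(k)}+a_2^{(k)})p^{e-k}+(p^{e-L}-1)\ =\ p^e\Big(\langle a_1\rangle_L+\langle a_2\rangle_L+\frac{1}{p^L}\Big)-1$$
(for $e\le L$ the maximum is simply $\beta_1+\beta_2$, which is irrelevant for the limit).

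To prove this, fix a pair $j_1\le\beta_1$, $j_2\le\beta_2$ adding without carrying. Since $a_1^{(L+1)}+a_2^{(L+1)}\ge p$ by definition of $L$, we cannot have $(j_1,j_2)=(\beta_1,\beta_2)$, so there is a least index $k^*$ at which the base-$p$ digit of $j_1$ differs from $a_1^{(k^*)}$ or that of $j_2$ differs from $a_2^{(k^*)}$. The digit of $j_1+j_2$ in place $p^{e-k}$ is always $\le p-1$, and for $k<k^*$ it equals $a_1^{(k)}+a_2^{(k)}$; together with $a_1^{(L+1)}+a_2^{(L+1)}\ge p$ this forces $k^*\le L+1$. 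If $k^*=L+1$, bounding the digit of $j_1+j_2$ in every place $p^{e-k}$ with $k\ge L+1$ by $p-1$ yields exactly the displayed bound. If $k^*\le L$, then whichever of $j_1,j_2$ first deviates must deviate downward (it is $\le\beta_i$ and agrees with $\beta_i$ in every higher place), so the digit of $j_1+j_2$ in place $p^{e-k^*}$ is at most $a_1^{(k^*)}+a_2^{(k^*)}-1$; bounding the lower places by $p-1$ then gives $j_1+j_2\le\sum_{k=1}^{k^*}(a_1^{(k)}+a_2^{(k)})p^{e-k}-1$, which is at most the right-hand side above because the omitted terms $a_1^{(k)}+a_2^{(k)}$ $(k^*<k\le L)$ and $p^{e-L}$ are nonnegative. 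Dividing by $p^e$ and letting $e\to\infty$ completes the proof.

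The only real obstacle is this combinatorial upper bound. The delicate points are the observation that a deviation at an index $k^*\le L$ strictly lowers the digit sum there (so that the extremal value can only be attained when $k^*=L+1$), and the bookkeeping in the sub-case where $j_1$ and $j_2$ both deviate at $k^*$ (then the digit sum there drops by at least two, which only improves the estimate). The remaining ingredients — the reduction to $\nu_{f}^{I_1+I_2}(p^e)$, the identity $(I_1+I_2)^{[p^e]}=I_1^{[p^e]}R+I_2^{[p^e]}R$, the translation via Lemmas \ref{LemmaPorwerAndTerms} and \ref{2.5}, and the lower bound from Lemma \ref{4.1} — are routine or already established.
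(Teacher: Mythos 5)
Your proof is correct, and the overall architecture matches the paper's (reduction via Lemma~\ref{LemmaPorwerAndTerms}, Proposition~\ref{tra} to identify $\nu_{g_i}^{I_i}(p^e)=p^e\langle a_i\rangle_e$, Lemma~\ref{2.5} for the binomial condition, and Lemma~\ref{4.1} for the lower bound when $L<\infty$). Where you diverge is the upper bound in the $L<\infty$ case. The paper argues by contradiction: assuming the $F$-threshold is strictly larger, it pushes the inequality down to the single level $e=L$ via the Frobenius-injectivity trick (Remark~\ref{2.7}), where the crude bounds $k_i\le\nu_{g_i}^{I_i}(p^L)=p^L\langle a_i\rangle_L$ already give the contradiction — note that it never needs the ``add without carrying'' constraint on $(k_1,k_2)$ there. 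You instead bound $\nu_f^{I_1+I_2}(p^e)$ directly at every level $e\geq L+1$ by a digit-by-digit analysis of the feasible pairs $(j_1,j_2)$, exploiting the carrying constraint together with $j_i\leq\beta_i$ to locate the first deviation index $k^*\leq L+1$ and show a strict digit loss when $k^*\leq L$. Your route is more hands-on and longer to write out, but it yields a stronger byproduct: combined with the explicit feasible pair constructed in the proof of Lemma~\ref{4.1}, it pins down $\nu_f^{I_1+I_2}(p^e)=p^e\bigl(\langle a_1\rangle_L+\langle a_2\rangle_L\bigr)+p^{e-L}-1$ exactly for all large $e$, rather than just its limit. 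The paper's route is shorter because the Frobenius reduction collapses the problem to a single finite level. Both are valid; the substantive verification in your version is the observation that a downward deviation of either $j_1$ or $j_2$ at a place $k^*\leq L$ strictly decreases the digit of $j_1+j_2$ there (forcing the extremal case $k^*=L+1$), and you have checked this carefully, including the case where both coordinates deviate.
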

\begin{proof}
We split  the proof into two cases.

\noindent \underline{Case 1:  $L<\infty$:}
By Lemma \ref{4.1} we have that 
\begin{equation}\label{11}
{\bf c}^{I_1+I_2}(f)\geq \left\langle a_{1}\right\rangle_{L} + \left\langle a_{2}\right\rangle_{L} +\frac{1}{p^{L}}
\end{equation}

We proceed by contradiction, and assume that the inequality in Equation \ref{11} is strict. Then, 
$$
\frac{\nu_{f}^{I_1+I_2}(p^{e})}{p^{e}}>\left\langle a_{1}\right\rangle_{L} + \left\langle a_{2}\right\rangle_{L} +\frac{1}{p^{L}} \, \hbox{ for } \, e \gg 0.$$
 Multiplying by $p^{e}$, we obtain that
$$
\nu_{f}^{I_1+I_2}(p^{e})>p^{e}(\left\langle a_{1}\right\rangle_{L} + \left\langle a_{2}\right\rangle_{L})+p^{e-L}=p^{e-L}\left(p^{L}(\left\langle a_{1}\right\rangle_{L} + \left\langle a_{2}\right\rangle_{L})+1 \right) \, \hbox{ for } \, e \geq L.\\
 $$
Therefore, 
\begin{equation*}
f^{p^{e-L}\left(p^{L}(\left\langle a_{1}\right\rangle_{L} + \left\langle a_{2}\right\rangle_{L})+1 \right)}=\left( f^{p^{L}(\left\langle a_{1}\right\rangle_{L} + \left\langle a_{2}\right\rangle_{L})+1}\right)^{p^{e-L}}\not\in(I_1+I_2)^{[p^{e}]}=\left( (I_1+I_2)^{[p^{L}]}\right) ^{[p^{e-L}]}.
\end{equation*}
By Remark \ref{2.7}, it follows that 
$f^{p^{L}(\left\langle a_{1}\right\rangle_{L} + \left\langle a_{2}\right\rangle_{L})+1}\not\in (I_1+I_2)^{[p^{L}]}.$
This implies that there exists ${\bf k}=(k_{1},k_{2})\in\mathbb{N}^{2}$ such that $k_{1}+k_{2}=p^{L}(\left\langle a_{1}\right\rangle_{L} + \left\langle a_{2}\right\rangle_{L})+1 $ and $g_1^{k_{1}}g_2^{k_{2}}\not\in(I_1+I_2)^{[p^{L}]}$. In particular, we have 
$g_1^{k_{1}}\not\in (I_1)^{[p^{L}]}, \, \hbox{ and } \,  g_2^{k_{2}}\not\in (I_2)^{[p^{L}]}.$
By Proposition \ref{tra}, we conclude that
$$
k_{1}\leq \nu_{g_1}^{I_1}(p^{L})=p^{L}\left\langle a_{1}\right\rangle_{L}, \, \hbox{ and } \, k_{2}\leq \nu_{g_2}^{I_2}(p^{L})=p^{L}\left\langle a_{2}\right\rangle_{L}.
$$
Since $k_{1}+k_{2}=p^{L}(\left\langle a_{1}\right\rangle_{L} + \left\langle a_{2}\right\rangle_{L})+1 $, we get 
$$
\left\langle a_{1}\right\rangle_{L} + \left\langle a_{2}\right\rangle_{L}+\frac{1}{p^{L}}=\frac{1}{p^{L}}(k_{1}+k_{2})\leq \left\langle a_{1}\right\rangle_{L} + \left\langle a_{2}\right\rangle_{L},
$$
which is a contradiction. Therefore, it holds  
$$
{\bf c}^{I_1+I_2}(f)= \left\langle a_{1}\right\rangle_{L} + \left\langle a_{2}\right\rangle_{L} +\frac{1}{p^{L}}.
$$

\noindent \underline{Case 2:  $L=\infty$:}
Since $f^{\nu_{f}^{I_1+I_2}(p^{e})}\not\in (I_1+I_2)^{[p^{e}]}$, we conclude  that there exists $k_{1},k_{2}\in\mathbb{N}$ such that $k_{1}+k_{2}=\nu_{f}^{I_1+I_2}(p^{e})$ and $g_1^{k_{1}}g_2^{k_{2}}\not\in(I_1+I_2)^{[p^{e}]}$ by Lemma \ref{LemmaPorwerAndTerms}. Since the polynomials $g_1,g_2$ are in different sets of variables, we have 
$$
g_1^{k_{1}}\not\in I_1^{[p^{e}]},\quad g_2^{k_{2}}\not\in I_2^{[p^{e}]}.
$$
It follows that $k_{1}\leq \nu_{g_1}^{I_1}(p^{e}) $ and $k_{2}\leq \nu_{g_2}^{I_2}(p^{e}) $. Since 
$$
\nu_{f}^{I_1+I_2}(p^{e})=k_{1}+k_{2} \leq \nu_{g_1}^{I_1}(p^{e})+\nu_{g_2}^{I_2}(p^{e}), 
$$
we have that 
$$
\frac{\nu_{f}^{I_1+I_2}(p^{e})}{p^{e}} \leq \frac{\nu_{g_1}^{I_1}(p^{e})}{p^{e}}+\frac{\nu_{g_2}^{I_2}(p^{e})}{p^{e}}.
$$
Taking the limit when $e\rightarrow \infty$, we have  that
\begin{align*}
{\bf c}^{I_1+I_2}(f)&=\displaystyle\lim_{e\rightarrow \infty}\frac{\nu_{f}^{I_1+I_2}(p^{e})}{p^{e}}\leq \displaystyle\lim_{e\rightarrow \infty}\left(\frac{\nu_{g_1}^{I_1}(p^{e})}{p^{e}}+\frac{\nu_{g_2}^{I_2}(p^{e})}{p^{e}}\right)\\
&=\displaystyle\lim_{e\rightarrow \infty}\frac{\nu_{g_1}^{I_1}(p^{e})}{p^{e}}+\displaystyle\lim_{e\rightarrow \infty}\frac{\nu_{g_2}^{I_2}(p^{e})}{p^{e}} =a_{1}+a_{2}.
\end{align*}

We now check that $a_{1}+a_{2}\leq {\bf c}^{I_1+I_2}(f)$. Since  $a_{1}$ and $a_{2}$ add without carrying in base $p$,  Remark \ref{addwithout} and  Lemma \ref{2.5} imply that
\begin{equation}\label{9}
{
p^{e}(\left\langle a_{1}\right\rangle_{e} +\left\langle a_{2}\right\rangle_{e})\choose
p^{e}\left\langle a_{1}\right\rangle_{e}} \not\equiv 0\mod p.
\end{equation}
Moreover,
$\nu_{g_1}^{I_1}(p^{e})=p^{e}\left\langle a_{1} \right\rangle_{e}$, and $\nu_{g_2}^{I_2}(p^{e}) = p^{e}\left\langle a_{2}\right\rangle_{e}$ by Proposition \ref{tra}. Then,
\begin{equation}\label{EqMTW}
g_1^{p^{e}\left\langle a_{1} \right\rangle_{e}}\not\in I_1^{[p^{e}]}, \quad g_2^{p^{e}\left\langle a_{2}\right\rangle_{e}}\not\in I_2^{[p^{e}]}. 
\end{equation}
Equations \ref{9}, \ref{EqMTW}, and   Lemma \ref{LemmaPorwerAndTerms} imply that
$$
f^{p^{e}\left(\left\langle a_{1}\right\rangle_{e}+ \left\langle a_{2}\right\rangle_{e} \right) }\not\in (I_1+I_2)^{[p^{e}]}.
$$
Thus,
$$
p^{e}(\left\langle a_{1}\right\rangle_{e} +\left\langle a_{2}\right\rangle_{e})\leq \nu_{f}^{I_1+I_2}(p^{e})\qquad \hbox{ for all } e\gg 1.
$$
Dividing by $p^{e}$, we have
$$
\left\langle a_{1}\right\rangle_{e} +\left\langle a_{2}\right\rangle_{e}\leq\frac{\nu_{f}^{I_1+I_2}(p^{e})}{p^{e}}\qquad \hbox{ for all } e \gg 1.
$$
Finally, taking the limit when $e\rightarrow \infty$, we obtain that  $a_{1}+a_{2} \leq {\bf c}^{I_1+I_2}(f)$, and  ${\bf c}^{I_1+I_2}(f)=a_{1}+a_{2}$.
\end{proof}

\begin{example}\label{exqo}
Let $a,b,n \in \mathbb{Z}_{>0}$, and $b \geq a$. Let $g_1=x^n \in R_1= \KK[x], g_2=y_1^ay_2^b \in R_2= \KK[y_1, y_2]$, and $f=g_1+g_2\in  R_1 \otimes_\KK R_2$.  	Let $I_1= \m_1=(x)$, $I_2 = \m_2=(y_1,y_2)$, and $\m = \m_1+\m_2=(x, y_1, y_2 )$. Since 
	$a_1={\bf c}^{\m_1}(g_1)=\frac{1}{n}$ and ${\bf c}^{\m_2}(g_2)=\frac{1}{b}$, we have that  	\begin{align*}
{\bf c}^{\mathfrak{m}}(f)&=\left\lbrace \begin{array}{ll}
\frac{1}{n}+\frac{1}{b},& \text{if}\quad L=\infty,\\
&\\
\left\langle \frac{1}{n}\right\rangle_{L} + \left\langle \frac{1}{b}\right\rangle_{L} +\frac{1}{p^{L}},& \text{if}\quad L<\infty.
\end{array}\right.
	\end{align*}
where $L=\sup\left\lbrace N\in \mathbb{N}\quad|\quad\left( \frac{1}{n}\right) ^{(e)}+\left( \frac{1}{b}\right) ^{(e)}\leq p-1\quad\text{for all}\quad 0\leq e\leq N \right\rbrace$.
It is worth mentioning that ${\bf c}^{\mathfrak{m}}(f)={\bf c}^{\mathfrak{m}}(x^{n}+y_2^{b})$.
\end{example}

\begin{remark}\label{RemNumberFTbound}
If  $g_1\in I_1$ and $g_2\in I_2$, then $f\in I_1 R+I_2R$. Then, if we run over all the test ideal $I_i=\tau(g^{\lambda_i}_i)$ for $\lambda_i\in (0,1)$, we obtain all the the ideals that contain $g_i$ and whose $F$-thresholds give the jumping numbers in $(0,1)$.
Using Theorem 	\ref{4.2}, we compute the $F$-thresholds with respect to $\tau(g^{\lambda_1}_1)R+\tau(g^{\lambda_2}_2)R$. This give $F$-jumping numbers of $f$, but not necessarily all.
\end{remark}

\section{Relation with log canonical threshold}\label{SecLCT}

The behavior of the $F$-pure threshold for different reductions mod $p$ and their connections with invariants over $\QQ$ have attracted much interest. We refer the to recent surveys on this topic \cite{BFS,TWSurvey}.  We are particularly interested in the  relation between the log canonical threshold and the $F$-threshold. As consequence of Theorem \ref{4.2}, we  give further evidence for  the conjecture relating the $F$-pure and log-canonical threshold.

Let  $f\in\mathbb{Q}[x_1,\ldots,x_n]$, and let $f_{p}$ denote  the polynomial over $\mathbb{F}_{p}$ obtained by reducing each coefficient of $f$ modulo $p$.

A log resolution of $f$ defined over $\QQ$ is a proper birational map $\pi_\QQ : Y_\QQ \rightarrow \mathbb{A}^n_\QQ$, with $Y_\QQ$ smooth, such that $f\mathcal{O}_{Y_\QQ}$ is a principal and defines a simple normal crossing divisor $D$. The log canonical threshold ${\bf lct_{0}}(f)$ of $f$ at the origin $0\in \mathbb{A}^n_\QQ$ is the minimum number $\lambda \in \QQ_{>0}$ such that the ideal $(\pi_\QQ)_\ast \mathcal{O}_{Y_\QQ}(K_{\pi_\QQ} - \lfloor \lambda \cdot D \rfloor)$ is contained in the maximal ideal $(x_1, x_2 \dots, x_n)_\QQ$. 

Musta\c{t}\v{a}, Tagaki, and Watanabe showed the following relation between  the F-pure threshold and the log canonical threshold.

\begin{theorem}[{\cite[Theorem 3.4]{MTW}}]\label{MTWthm} 
Let  $f\in\mathbb{Q}[x_1,\ldots,x_n]$, and let $f_{p}$ denote  the polynomial over $\mathbb{F}_{p}$ obtained by reducing each coefficient of $f$ modulo $p$. Then, 
$\lim_{p\rightarrow\infty}{\bf c}^{\mathfrak{m}}(f_{p})={\bf lct_{0}}(f)$.
\end{theorem}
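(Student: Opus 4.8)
Since the statement is \cite[Theorem 3.4]{MTW}, the plan is simply to cite it as a black box (which is how it is used here); but if one wanted to reprove it, the following is the route I would take. Recall the two standard descriptions at the origin: writing $\mathcal{J}(f^{\lambda})$ for the multiplier ideal of $f$ over $\QQ$, one has ${\bf lct_{0}}(f)=\inf\{\lambda\in\QQ_{>0}\mid \mathcal{J}(f^{\lambda})\subseteq \m\}$, while ${\bf c}^{\m}(f_{p})=\sup\{\lambda\in\QQ_{>0}\mid \tau(f_{p}^{\lambda})\not\subseteq \m\}$ with $\tau(f_{p}^{\lambda})$ the test ideal of Definition \ref{testid}; both numbers are rational. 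It is therefore enough to show $\limsup_{p\to\infty}{\bf c}^{\m}(f_{p})\leq {\bf lct_{0}}(f)$ and $\liminf_{p\to\infty}{\bf c}^{\m}(f_{p})\geq {\bf lct_{0}}(f)$, as these two inequalities force the limit to exist and to equal ${\bf lct_{0}}(f)$. In both halves the engine is the Hara--Yoshida comparison \cite{HY} between test ideals of reductions mod $p$ and reductions mod $p$ of multiplier ideals.

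For the upper bound, use that for each fixed $\lambda\in\QQ_{>0}$ there is $p_{0}(\lambda)$ with $\tau(f_{p}^{\lambda})\subseteq \left(\mathcal{J}(f^{\lambda})\right)_{p}$ for all $p\geq p_{0}(\lambda)$, where $(\,\cdot\,)_{p}$ denotes reduction mod $p$. Fix a rational $\lambda>{\bf lct_{0}}(f)$; then $\mathcal{J}(f^{\lambda})\subseteq \m$ over $\QQ$, so after clearing denominators each generator of $\left(\mathcal{J}(f^{\lambda})\right)_{p}$ has zero constant term, hence $\left(\mathcal{J}(f^{\lambda})\right)_{p}\subseteq \m$; therefore $\tau(f_{p}^{\lambda})\subseteq \m$ and ${\bf c}^{\m}(f_{p})\leq \lambda$ once $p\geq p_{0}(\lambda)$. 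Letting $\lambda$ decrease to ${\bf lct_{0}}(f)$ through the rationals gives $\limsup_{p}{\bf c}^{\m}(f_{p})\leq {\bf lct_{0}}(f)$.

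For the lower bound, invoke the harder half of \cite{HY}: for each fixed $\lambda\in\QQ_{>0}$ one actually has $\tau(f_{p}^{\lambda})=\left(\mathcal{J}(f^{\lambda})\right)_{p}$ for all $p$ large (the bound depending on $\lambda$). Fix a rational $\lambda<{\bf lct_{0}}(f)$; then $\mathcal{J}(f^{\lambda})\not\subseteq \m$, so some generator $u$ has $u(0)\neq 0$, and for $p$ large enough that the equality holds and $p\nmid u(0)$ the class $\overline{u}\in \left(\mathcal{J}(f^{\lambda})\right)_{p}=\tau(f_{p}^{\lambda})$ still has nonzero constant term; hence $\tau(f_{p}^{\lambda})\not\subseteq \m$ and ${\bf c}^{\m}(f_{p})\geq \lambda$. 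Thus $\liminf_{p}{\bf c}^{\m}(f_{p})\geq \lambda$ for every rational $\lambda<{\bf lct_{0}}(f)$, and letting $\lambda$ increase to ${\bf lct_{0}}(f)$ completes the argument.

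The only genuine content is the Hara--Yoshida comparison itself, which depends on resolution of singularities in characteristic zero together with a delicate control of how the relevant Frobenius/trace maps behave under reduction mod $p$; this is the real obstacle, and the reason the theorem is quoted rather than reproved in the present paper. Everything else is a routine interchange of limits, legitimate precisely because ${\bf lct_{0}}(f)$ is rational and so can be approached by rationals from either side. It is worth stressing that this method yields only the limit; the sharper --- and still open --- assertion that ${\bf c}^{\m}(f_{p})={\bf lct_{0}}(f)$ for infinitely many $p$, for which this paper gives further evidence, does not follow from it.
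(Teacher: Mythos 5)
Your sketch is correct and reproduces the route actually taken in the cited source: split the equality into a $\limsup$ upper bound and a $\liminf$ lower bound, approach ${\bf lct_{0}}(f)$ by rationals from either side, and deduce both inequalities from the Hara--Yoshida comparison between test ideals of the reduction mod $p$ and reductions mod $p$ of the multiplier ideal. The present paper quotes Theorem \ref{MTWthm} as a black box with no internal proof, so there is nothing to compare against here, and you correctly identify both the argument and the reason it is cited rather than reproved.
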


 Furthermore, they have formulated the following conjecture. 

\begin{conjecture}[{\cite[Conjecture 3.6]{MTW}}]\label{MTWconj} 
Let  $f\in\mathbb{Q}[x_1,\ldots,x_n]$, and let $f_{p}$ denote  the polynomial over $\mathbb{F}_{p}$ obtained by reducing each coefficient of $f$ modulo $p$.
There exist infinitely many prime numbers $p$ such that 
${\bf lct_{0}}(f)={\bf c}^{\mathfrak{m}}(f_{p}).$
\end{conjecture}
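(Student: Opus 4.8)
The statement is the Musta\c{t}\v{a}--Takagi--Watanabe conjecture, which is open for a general $f$; what follows is the plan I would carry out to prove it for the Thom--Sebastiani type polynomials produced in this paper, together with an indication of exactly what obstructs the general case. By Theorem \ref{MTWthm} one has ${\bf c}^{\m}(f_p)\to {\bf lct_{0}}(f)$ as $p\to\infty$; the whole content of the conjecture is to promote this limit to an \emph{equality} for infinitely many $p$, i.e.\ to exhibit an infinite set of primes along which no drop occurs. The plan is to do this whenever ${\bf c}^{\m}(f_p)$ is controlled by a formula that is periodic in $p$, by selecting the primes in a single arithmetic progression on which the formula returns ${\bf lct_{0}}(f)$ on the nose.

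The concrete mechanism is Theorem \ref{4.2}. I would take $f=g_1+g_2$ of Thom--Sebastiani type with $g_i\in\m_i$, set $\lambda_i={\bf lct_{0}}(g_i)$, and record that Kollár's Thom--Sebastiani formula for the log canonical threshold \cite{Kollar} gives ${\bf lct_{0}}(f)=\min\{\lambda_1+\lambda_2,\,1\}$. Assume first $\lambda_1+\lambda_2\le 1$ and that each $g_i$ belongs to a class in which the conjecture is already known in the form that all computed examples take: for a fixed modulus $m_i$ (a multiple of the denominator of $\lambda_i$) one has ${\bf c}^{\m_i}((g_i)_p)=\lambda_i$ for every prime $p\equiv 1\pmod{m_i}$. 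This is the shape of Hernández's formulas for diagonal \cite{HerDiag} and binomial \cite{HerBinom} hypersurfaces and of the quasi-homogeneous curve formula \cite{HNBWZ}. Put $s=\operatorname{lcm}(m_1,m_2)$. For a prime $p\equiv 1\pmod s$, write $p=sw+1$ and $\lambda_i=r_i/s_i$; since $s_i\mid s$, Example \ref{2.2} shows the non-terminating base-$p$ expansions of $a_1:={\bf c}^{\m_1}((g_1)_p)=\lambda_1$ and $a_2:={\bf c}^{\m_2}((g_2)_p)=\lambda_2$ have constant digits, whence a one-line computation using $\lambda_1+\lambda_2\le 1$ gives $a_1^{(e)}+a_2^{(e)}\le p-1$ for all $e$, i.e.\ $L=\infty$. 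Theorem \ref{4.2} then yields ${\bf c}^{\m}(f_p)=a_1+a_2=\lambda_1+\lambda_2={\bf lct_{0}}(f)$, and by Theorem \ref{2.6.1} there are infinitely many such $p$. The remaining cases are routine variants: when $\lambda_1+\lambda_2=1$ the same primes give ${\bf c}^{\m}(f_p)=1={\bf lct_{0}}(f)$; when $\lambda_1+\lambda_2>1$ or ${\bf lct_{0}}(f)\in p^{-e}\cdot\NN$ one reduces to the previous case by lowering one exponent. Finally, iterating the Thom--Sebastiani step --- the class of ``building blocks admitting such a progression'' being stable under it --- propagates the conclusion to arbitrary disjoint-variable sums of diagonal monomials, binomials and quasi-homogeneous curves.

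The hard part, and the reason the conjecture remains open, is the passage from this structured class to an arbitrary $f$. For a general polynomial there is no known closed formula for ${\bf c}^{\m}(f_p)$ and no reason to expect periodic behaviour in $p$; and Theorem \ref{MTWthm} pins down only the limit, so a priori the values ${\bf c}^{\m}(f_p)$ could approach ${\bf lct_{0}}(f)$ strictly from below and never attain it along an infinite set. The Thom--Sebastiani argument succeeds only because it inherits periodicity from its components and because the ``no carrying'' regime $L=\infty$ is forced along $p\equiv 1\pmod s$ by the elementary digit computation of Example \ref{2.2}; securing either ingredient for an $f$ outside the Thom--Sebastiani world appears to require a genuinely new idea.
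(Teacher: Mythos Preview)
You correctly recognize that Conjecture~\ref{MTWconj} is open and is not proved in the paper; what the paper offers is precisely your plan, packaged as Theorem~\ref{TS+MTW} and the examples following it: combine Koll\'ar's Thom--Sebastiani formula for the log canonical threshold with Theorem~\ref{4.2}, and invoke Dirichlet to produce infinitely many primes $p\equiv 1\pmod s$ along which the base-$p$ digits of $a_1,a_2$ are constant and hence $L=\infty$. Your write-up is in fact slightly more explicit than the paper's Theorem~\ref{TS+MTW} (which leaves the existence of the infinite set $B\subseteq B_1\cap B_2$ as a hypothesis to be checked example by example, exactly via the $p\equiv 1\pmod s$ trick you spell out), but the mechanism is identical.
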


We now use Theorem \ref{4.2}  to study Conjecture \ref{MTWconj}.

\begin{theorem}\label{TS+MTW} 
We consider Notation \ref{NotationFT} with  $\KK=\QQ$.   Let $f=g_1+g_2 \in R_1 \otimes_\QQ R_2$ be a Thom-Sebastiani type polynomial as in the previous section. Assume that $g_1\in R_1$ and $g_2 \in R_2$ satisfy Conjeture \ref{MTWconj}. Denote by $B_i$, with $i=1,2$  the infinite set of primes $p$ such that ${\bf lct_{0}}(g_i)={\bf c}^{\mathfrak{m}}((g_i)_{p}).$ 
Assume that $a_1+ a_2 < 1$ and  there exists an infinite set $B \subseteq B_1\cap B_2$ such that  the non terminating base $p$ extensions of $a_1$ and $a_2$ add without carrying for every $p \in B$. Then    $f=g_1+g_2$ satisfies the Conjeture \ref{MTWconj}. 
\end{theorem}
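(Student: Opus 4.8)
The plan is to combine the known asymptotic statement (Theorem \ref{MTWthm}) with the explicit formula of Theorem \ref{4.2}. First I would record the log canonical threshold side of the picture: by Thom--Sebastiani additivity for log canonical thresholds (\cite{Kollar}), $\mathbf{lct_0}(f)=\mathbf{lct_0}(g_1)+\mathbf{lct_0}(g_2)$, and by Theorem \ref{MTWthm} applied to $g_1$ and $g_2$ separately we have $\mathbf{lct_0}(g_i)=\lim_{p\to\infty}\mathbf{c}^{\mathfrak{m}_i}((g_i)_p)$; in particular, along the infinite set $B\subseteq B_1\cap B_2$ we have $\mathbf{c}^{\mathfrak{m}_i}((g_i)_p)=\mathbf{lct_0}(g_i)$ exactly, so that for every $p\in B$, $\mathbf{lct_0}(f)=a_1^{(p)}+a_2^{(p)}$ where I write $a_i^{(p)}:=\mathbf{c}^{\mathfrak{m}_i}((g_i)_p)$.

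Next I would invoke Theorem \ref{4.2} with $I_1=\mathfrak{m}_1$, $I_2=\mathfrak{m}_2$, $I_1+I_2=\mathfrak{m}$. The hypothesis on $B$ is precisely that for each $p\in B$ the non-terminating base $p$ expansions of $a_1^{(p)}$ and $a_2^{(p)}$ add without carrying, i.e. $(a_1^{(p)})^{(e)}+(a_2^{(p)})^{(e)}\le p-1$ for all $e\ge 1$, which is exactly the condition $L=\infty$ in the statement of Theorem \ref{4.2}. Combined with $a_1^{(p)}+a_2^{(p)}\le a_1+a_2<1$ (monotonicity of the $F$-pure threshold under reduction is not even needed here since $\mathbf{c}^{\mathfrak{m}_i}((g_i)_p)\le\mathbf{lct_0}(g_i)$ and the sum of the lct's is $<1$), Theorem \ref{4.2} yields
$$
\mathbf{c}^{\mathfrak{m}}(f_p)=a_1^{(p)}+a_2^{(p)}=\mathbf{lct_0}(f)\qquad\text{for every }p\in B.
$$
Since $B$ is infinite, this is exactly the assertion of Conjecture \ref{MTWconj} for $f$.

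The only genuine subtlety, and the step I would be most careful about, is the identification of the reduction of the Thom--Sebastiani sum with the sum of the reductions and the compatibility of the "model mod $p$" operation with the tensor decomposition: one must check that $f_p=(g_1)_p+(g_2)_p$ inside $R_1\otimes_{\mathbb{F}_p}R_2$ for all but finitely many $p$ (clear, since finitely many denominators appear in the coefficients of $g_1,g_2$), and that $\mathbf{c}^{\mathfrak{m}}(f_p)$ computed there agrees with the $F$-pure threshold at the origin used in Conjecture \ref{MTWconj}. Both are routine: discarding the finitely many bad primes keeps $B$ infinite, and the $F$-pure threshold of a polynomial in a polynomial ring over a perfect field is intrinsic. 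A minor bookkeeping point is that $\mathbf{lct_0}(g_i)=a_i$ need not hold for every prime, only that it holds along $B_i$; this is why the hypothesis is phrased in terms of the carrying condition holding for $p\in B$ rather than for the fixed rational numbers $a_1,a_2$, and I would make sure the write-up tracks $a_i^{(p)}$ rather than $a_i$ throughout, collapsing the two only on the set $B$.
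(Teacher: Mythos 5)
Your proof is correct and matches the paper's argument, which is stated in a single sentence citing the Thom--Sebastiani additivity for log canonical thresholds together with Theorem \ref{4.2}; you have correctly filled in the steps that citation compresses. The careful attention to the reduction-mod-$p$ compatibility and to the distinction between $a_i$ and $\mathbf{c}^{\mathfrak{m}_i}((g_i)_p)$ is sound, and on the set $B$ these coincide exactly, which is all that is used.
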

\begin{proof}
This is a consequence of the Thom-Sebastiani property for log canonical thresholds \cite[Proposition 8.21]{MR1492525} and Theorem \ref{4.2}. 
\end{proof}

We know provide a series of examples where the hypothesis of Theorem \ref{TS+MTW} are satisfied.

\begin{example}[Thom-Sebastiani type sum of monomials] If $g_1 \in R_1$ and $g_2 \in R_2$  monomials, then $B_1=B_2$ is the set of all primes $p$. If $a_1+a_2<1$, Theorem \ref{2.6} implies the existence of an infinite set $B$ of primes $p$ for which the non terminating base $p$ expansions of $a_1$ and $a_2$ are periodic, and therefore add without carrying \cite[Lemma 4.16 and Example 4.4]{7}.  Then the hypothesis of Theorem \ref{TS+MTW} are satisfied, and Conjecture \ref{MTWconj} holds for $f$. Particular cases of this situation are diagonal hypersurfaces, and Thom-Sebastiani type binomials (or multinomials, since the argument extends to any finite number of summands).

A case of the latteris  Example \ref{exqo}. Notice that if ${\rm gcd}(a,b,n) \ne 1$ the hypersurfaces of Example \ref{exqo} are irreducible quasi-ordinary hypersurfaces with only one characteristic exponent.  
We point out that  the log-canonical thresholds of irreducible quasi-ordinary hypersurfaces has already been computed \cite{MR2957197}.
\end{example}

\begin{example}[Suspensions or ramified cyclic coverings]  Let  $g_1 \in R_1$ such that the set $B$ contains and infinite number of primes $p$ such that $(p-1)a_1 \in \mathbb{N}$. An example of this is  $g_1= x_{1}^{b_{1}}x_{2}^{b_{2}}+x_{1}^{c_{1}}x_{2}^{c_{2}}\in R_1=\mathbb{K}[x_{1},x_{2}]$  a binomial hypersurface whose maximal splitting polytope ${\bf P}_{f}$  contains a unique maximal point $\eta=(\frac{r_{1}}{s_{1}},\frac{r_{2}}{s_{2}})\in\mathbb{Q}^{2}$, with $\frac{r_{1}+1}{s_{1}}+\frac{r_{2}+1}{s_{2}}\leq 1$ )  \cite[Theorem 4.1]{HerBinom}. Let $g_{2} = y_1^{d} \in \mathbb{K}[y_{1}]$. Then $a_{2}= \frac{1}{d} = {\bf lct}_0(g_{2})$ for all primes, and so, $B_{2}$ is the set of all primes. Finally, by Theorem 2.8, there is a infinite subset $B$ of $B_1=B_1 \cap B_{2}$, consisting of all primes $p$ such that $(p-1)a_1,  (p-1)\frac{1}{d} \in \mathbb{N}$. We have  that, if $a_1+a_2<1$, then the non terminating base $p$ expansions of $a_1$ and $a_{2}$ add without carrying  \cite[Lemma 4.16 and Example 4.4]{7}. We conclude that ${\bf c}^{\mathfrak{m}}(f)= a_1+ a_1={\bf lct_{0}}(f)$ for all $p\in B$. Hence, $f=g_1+g_{2}$ satisfies Conjecture \ref{MTWconj}.
We note  that it is possible to iterated the previous construction because $f$ satisfies the hypothesis required on $g_1$. 
\end{example}

The following examples combine  the  previous cases.

\begin{example}
	Let $R_1=\mathbb{K}[x,y]$, $R_2=\mathbb{K}[w,z]$, $R_3=\mathbb{K}[t,u,v]$, and $g_1= x^{4}+y^{4}  \in \m_{1}=\left(x, y\right)$, $g_2= z^{7}w^{2}+z^{5}w^{6} \in \m_{2}=\left(z, w\right)$, and $g_3= v^{2}u^{3}t^{8} \in \m_{3}=(t, u, v)$. Set $f=g_1+g_2+g_3 \in R_1 \otimes_\KK R_2 \otimes_\KK R_3$, and $\m= \left(x, y, z, w, v, u, t \right)$. We compute the F-pure threshold of  $f$, and check that $f$ satisfies Conjecture \ref{MTWconj}. 
	\begin{itemize}
		\item If $p\equiv 1\mod 16$, Theorem \ref{4.2} implies that  ${\bf c}^{\mathfrak{m}_1}(g_1)=\frac{8}{16}={\bf lct}_{0}(g_1)$.
		\item If $p\equiv 1\mod 32$, then  ${\bf c}^{\mathfrak{m}_{2}}(g_2)=\frac{3}{16}$   \cite[Theorem 4.1]{HerBinom}. Moreover, Theorem \ref{MTWthm} implies that  ${\bf c}^{\mathfrak{m}_{2}}(g_2) = {\bf lct}_{0}(g_2)=\frac{3}{16}$.
		\item  We have that		${\bf c}^{\mathfrak{m}_{3}}(g_3)=\frac{2}{16}={\bf lct}_{0}(v^{2}u^{3}t^{8})$  for any prime $p$ 
	\end{itemize}
The set of primes $p$ such that $ p\equiv1\mod 32$ is infinite by Theorem \ref{2.6.1}. Therefore, there are infinite primes    such that ${\bf c}^{\mathfrak{m}}(f)=\frac{13}{16}={\bf lct}_{0}(f)$.
\end{example}

\begin{example}[Exploiting other properties of the log canonical threshold and the $F$-pure threshold] Let  $g_{1}$ and $g_{2}$ be polynomials 
satisfying  the conditions in Theorem \ref{TS+MTW}. 

We consider first  $f=(g_{1}+g_{2})^{n}$.  Then, for any prime $p$ in the infinite set $B$, we have  
$$
{\bf c}^{\mathfrak{m}}(f)=\frac{{\bf c}^{\mathfrak{m}}(g_{1})+{\bf c}^{\mathfrak{m}}(g_{2})}{n}=\frac{{\bf lct}_{0}(g_{1})+{\bf lct}_{0}(g_{2})}{n}={\bf lct}_{0}(f).
$$
Thus,  Conjecture \ref{MTWconj} holds for $f$. 

We consider now $f=g_{1}g_{2}$. Then, we have
$${\bf c}^{\mathfrak{m}}(f)\leq{\bf lct}_{0}(f)\leq\min\left\lbrace {\bf lct}_{0}(g_{1}),{\bf lct}_{0}(g_{2})\right\rbrace=\min\left\lbrace {\bf lct}_{0}(g_{1}),{\bf lct}_{0}(g_{2})\right\rbrace = {\bf c}^{\mathfrak{m}}(f)$$ 
 for any integer $p$ in the infinite set $B$  \cite[Proposition 3.2]{TW} \cite[Theorem 8.20]{MR1492525} \cite[Lemma 3.3]{HerBinom}.

Hence,  ${\bf c}^{\mathfrak{m}}(f)={\bf lct}_{0}(f)$ for any prime $p$ in $B$, and  so, Conjecture \ref{MTWconj} holds for $f$. 
\end{example}

\begin{example}
Consider the polynomial
$$
f=w^{8}x^{4}y^{24}z^{44}+4u^{3}v^{7}w^{8}x^{4}y^{24}z^{44}+6u^{6}v^{14}w^{4}x^{2}y^{12}z^{22}+4u^{9}v^{21}w^{2}x^{1}y^{6}z^{11}+u^{12}v^{28},
$$ 
and $p\equiv 2\mod 77$. We note that  that $f=\left( u^{3}v^{7}+w^{2}xy^{6}z^{11}\right)^{4}=(g_{1}+g_{2})^{4}$
for $g_{1}=u^{3}v^{7}\in\KK[u,v]$ and $g_{2}=w^{2}xy^{6}z^{11}\in\KK[w,x,y,z]$. Then,   ${\bf c}^{\mathfrak{m}}(f)={\bf c}^{\mathfrak{m}}(g_1+g_2)/4$, where $\mathfrak{m}= \left(u, v, w, x, y, z\right)$.  For the monomials  $g_{1}$, $g_{2}$ we get that 
$${\bf c}^{\mathfrak{m}_{1}}(g_{1})=1/7={\bf lct}_{0}(g_{1}), \, \hbox{ and } \,
{\bf c}^{\mathfrak{m}_{2}}(g_{2})=1/11={\bf lct}_{0}(g_{2}),$$
where $\mathfrak{m}_{1}=\left(u, v\right)$, and $\mathfrak{m}_{2}=\left(w, x, y, z\right)$. The number $1/7$ and $1/11$ add without carrying whenever $p\equiv2\mod77$, then  ${\bf c}^{\mathfrak{m}}(f)=1/4(1/7+1/11)=9/154$ by Theorem \ref{4.2}. There are infinitely many prime numbers that satisfies  $p\equiv2\mod77$, and 
$$
{\bf lct}_{0}(f)={\bf lct}_{0}(g_{1}+g_{2})/4=9/154={\bf c}^{\mathfrak{m}}(f).
$$
Therefore Conjecture \ref{MTWconj} holds for the polynomial $f$.
\end{example}

\begin{example}
Let $f=w^{4}x^{6}t^{3}+uv^{2}w^{4}x^{6}+t^{3}y^{2}z+y^{2}zuv^{2}\in\KK[t,u,v,w,x,y,z]$, and $p\equiv1\mod6$. Notice that 
$$
f=(w^{4}x^{6}+y^{2}z)(t^{3}+uv^{2})=g_{1}g_{2},
$$
for $g_{1}=w^{4}x^{6}+y^{2}z\in\KK[w,x,y,z]$, and $g_{2}=t^{3}+uv^{2}\in\KK[t,u,v]$.  We have that ${\bf c}^{\mathfrak{m}}(f)=\min\left\lbrace {\bf c}^{\mathfrak{m}_{1}}(g_{1}),{\bf c}^{\mathfrak{m}_{2}}(g_{2})\right\rbrace $, where $\mathfrak{m}_{1}=\left(w, x, y, z\right)$ and $\mathfrak{m}_{2}=\left(t, u, v\right)$. Using Theorem \ref{4.2}, we compute the $F$-threshold of $g_{1}$, and $g_{2}$. For $p\equiv1\mod6$ we have that $1/6$, and $1/2$ add without carrying. Then, ${\bf c}^{\mathfrak{m}_{1}}(g_{1})=1/6+1/2=2/3$. By Example \ref{exqo}, we obtain ${\bf c}^{\mathfrak{m}_{2}}(g_{2})=5/6$. Therefore, ${\bf c}^{\mathfrak{m}}(f)={\bf c}^{\mathfrak{m}_{1}}(g_{1})=2/3$.  There are infinitely many prime numbers that satisfies  $p\equiv1\mod6$, and  
$$
{\bf c}^{\mathfrak{m}}(f)\leq{\bf lct}_{0}(f)\leq\min\left\lbrace {\bf lct}_{0}(g_{1}),{\bf lct}_{0}(g_{2})\right\rbrace =2/3.
$$
Therefore Conjecture \ref{MTWconj} holds for the polynomial $f$.
\end{example}

\section{Test ideal of a Thom-Sebastiani type polynomial}\label{5}

In this section we compute a formula for the first non-trivial test ideal of a  Thom-Sebastiani type polynomial.

\begin{notation}\label{NotationTest}
 Let $\{x_1, \dots, x_n\}$ and $\{ y_1, \dots, y_m\}$ be two disjoint sets of variables. Consider the rings of polynomials $R_{1}=\KK[x_{1},\dots,x_{n}]$ and $R_{2}=\KK[y_{1},\dots,y_{m}]$. Denote by $\mathfrak{m}_{1}=\left( x_{1},\dots,x_{n}\right)$ and $\mathfrak{m}_{2}=\left( y_{1},\dots,y_{m}\right)$ their maximal homogeneous ideals.
  Let $f=g_1+g_2 \in R=R_1 \otimes_\KK R_2=\KK[x_1, \dots, x_n, y_1, \dots, y_m]$, where  $g_1\in R_{1}$ and $g_2\in R_{2}$. Let $a_{1}={\bf c}^{\mathfrak{m}_{1}}(g_{1})$, and $a_{2}={\bf c}^{\mathfrak{m}_{2}}(g_{2})$.  
\end{notation}

We start with some of preparation lemmas before proving Theorem \ref{B}.

\begin{lemma}\label{lemma1}
Let $\alpha_{1},\alpha_{2}\in\mathbb{R}_{\geq0}$. Assume that $\alpha_{1}^{(e)}+\alpha_{2}^{(e)}\leq p-2$, and 
$$
{p^{e}(\left\langle \alpha_{1}\right\rangle_{e} +\left\langle \alpha_{2}\right\rangle_{e}) \choose
p^{e} \left\langle \alpha_{1}\right\rangle_{e}}
\not\equiv 0\mod p,
$$
then 
$$
{p^{e}(\left\langle \alpha_{1}\right\rangle_{e} +\left\langle \alpha_{2}\right\rangle_{e}) +1\choose
p^{e}\left\langle \alpha_{1}\right\rangle_{e}+1}\not\equiv0\mod p
\quad \& \quad
{p^{e}(\left\langle \alpha_{1}\right\rangle_{e} +\left\langle \alpha_{2}\right\rangle_{e}) +1\choose
p^{e}\left\langle \alpha_{2}\right\rangle_{e}+1}\not\equiv0\mod p.
$$

\end{lemma}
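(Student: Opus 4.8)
The plan is to work entirely with base $p$ digits, using Lemma \ref{2.5} (Lucas/Kummer criterion) as the sole combinatorial input. Write $A = p^e\langle\alpha_1\rangle_e$ and $B = p^e\langle\alpha_2\rangle_e$; these are nonnegative integers whose base $p$ digits are exactly $\alpha_1^{(1)},\dots,\alpha_1^{(e)}$ and $\alpha_2^{(1)},\dots,\alpha_2^{(e)}$ respectively (with the least significant digit being the $e$-th truncation digit). The hypothesis that $\binom{A+B}{A}\not\equiv 0 \bmod p$ says, by Lemma \ref{2.5}, that $A$ and $B$ add without carrying in base $p$, i.e. $\alpha_1^{(j)}+\alpha_2^{(j)}\le p-1$ for $1\le j\le e$. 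The extra hypothesis $\alpha_1^{(e)}+\alpha_2^{(e)}\le p-2$ controls the least significant digit.

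First I would prove the first nonvanishing statement: $\binom{A+B+1}{A+1}\not\equiv 0\bmod p$. By Lemma \ref{2.5} this is equivalent to showing that $A+1$ and $B$ add without carrying in base $p$. Since $\alpha_2^{(e)}\le p-2 - \alpha_1^{(e)} \le p-1$ and in particular $\alpha_1^{(e)} \le p-2$, the least significant digit of $A$ is at most $p-2$, so adding $1$ to $A$ simply increments its last digit from $\alpha_1^{(e)}$ to $\alpha_1^{(e)}+1$ and leaves all other digits unchanged — there is no carry propagation inside $A+1$. Now check digit by digit that $A+1$ and $B$ add without carrying: for $j < e$ the digits are unchanged and the no-carrying inequality $\alpha_1^{(j)}+\alpha_2^{(j)}\le p-1$ still holds; for $j = e$ the relevant inequality becomes $(\alpha_1^{(e)}+1) + \alpha_2^{(e)} \le p-1$, which is exactly the hypothesis $\alpha_1^{(e)}+\alpha_2^{(e)}\le p-2$. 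Hence $A+1$ and $B$ add without carrying, and Lemma \ref{2.5} gives $\binom{(A+1)+B}{A+1} = \binom{A+B+1}{A+1}\not\equiv 0\bmod p$. The second statement is symmetric: replace $A+1$ by $A$ and $B$ by $B+1$, and run the identical argument with the roles of $\alpha_1$ and $\alpha_2$ interchanged, using that $\binom{A+B+1}{B+1} = \binom{A+B+1}{A}$ up to the standard symmetry of binomial coefficients and that $\alpha_2^{(e)}\le p-2$ as well.

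The only genuinely delicate point — and the one worth stating carefully rather than asserting — is that incrementing $A$ by $1$ does not trigger a carry; this is precisely where the strengthened hypothesis $\alpha_1^{(e)}+\alpha_2^{(e)}\le p-2$ (rather than just $\le p-1$) is used, since it forces $\alpha_1^{(e)}\le p-2$ (and likewise $\alpha_2^{(e)}\le p-2$) so that the last digit has room to grow. Everything else is a routine digitwise comparison, so I would phrase the whole argument through Remark \ref{addwithout} and Lemma \ref{2.5}, reducing both conclusions to the single observation about the bottom digit. I do not anticipate any real obstacle beyond bookkeeping the indexing convention (which digit of $A$ corresponds to which $\alpha_1^{(j)}$), which should be fixed once at the start.
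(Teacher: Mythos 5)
Your proof is correct, but it takes a genuinely different route from the paper. You reduce both conclusions to Lucas' criterion (Lemma \ref{2.5}): writing $A=p^e\langle\alpha_1\rangle_e$ and $B=p^e\langle\alpha_2\rangle_e$, you check digit by digit that $A+1$ and $B$ (respectively $A$ and $B+1$) still add without carrying, the only new inequality being $(\alpha_1^{(e)}+1)+\alpha_2^{(e)}\le p-1$, which is exactly the hypothesis $\alpha_1^{(e)}+\alpha_2^{(e)}\le p-2$. The paper instead uses the arithmetic identity
\[
\binom{A+B+1}{A+1}=\binom{A+B}{A}\cdot\frac{A+B+1}{A+1},
\]
so that from $\binom{A+B+1}{A+1}(A+1)=\binom{A+B}{A}(A+B+1)$ one only needs $A+B+1\not\equiv 0\bmod p$, which follows because $A+B\equiv\alpha_1^{(e)}+\alpha_2^{(e)}\le p-2\bmod p$. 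The paper's argument is a one-liner and never invokes Lucas again, while your version is more conceptual and makes visible exactly where the strengthened bound $\le p-2$ (rather than $\le p-1$) enters — it is the room needed to increment the units digit without spilling over. Both are sound, and the bookkeeping you flag (that the $e$-th truncation digit is the units digit of $A$, so $A+1$ does not carry when $\alpha_1^{(e)}\le p-2$) is handled correctly.
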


\begin{proof} From the definition of binomial coefficient we get  the equalities
$$
{p^{e}(\left\langle \alpha_{1}\right\rangle_{e} +\left\langle \alpha_{2}\right\rangle_{e}) +1\choose
p^{e} \left\langle \alpha_{i}\right\rangle_{e}+1}={p^{e}(\left\langle \alpha_{1}\right\rangle_{e} +\left\langle \alpha_{2}\right\rangle_{e}) \choose
p^{e}\left\langle \alpha_{i}\right\rangle_{e}}\cdot\frac{p^{e}(\left\langle \alpha_{1}\right\rangle_{e} +\left\langle \alpha_{2}\right\rangle_{e})+1}{p^{e}\langle \alpha_{i}\rangle_{e} +1},
$$
with $i=1,2$.
The result follows from these equalities and  the congruence 
$$p^{e}(\left\langle \alpha_{1}\right\rangle_{e} +\left\langle \alpha_{2}\right\rangle_{e})+1\equiv \alpha_{1}^{(e)}+\alpha_{2}^{(e)}+1 \not\equiv 0\mod p.$$ 
The latter holds because $\alpha_{i}^{(e)}\leq \alpha_{1}^{(e)}+\alpha_{2}^{(e)}\leq p-2$. 
\end{proof}

\begin{lemma}\label{lemma-d}
Let $\alpha_{1},\alpha_{2}\in\mathbb{R}_{\geq0}$, 
and assume that $\alpha_{1}+\alpha_{2}<1$.
Let
$$
L=\sup\left\lbrace N\in \mathbb{N}\quad|\quad \alpha_{1}^{(e)}+\alpha_{2}^{(e)}\leq p-1\quad\text{for all}\quad 0\leq e\leq N \right\rbrace
$$
and 
$$d=\sup\left\lbrace e\leq L\quad|\quad \alpha_{1}^{(e)}+\alpha_{2}^{(e)}\leq p-2\right\rbrace.$$ 
Then,
 $$
 \left\langle \alpha_{1}\right\rangle_{d} + \left\langle \alpha_{2}\right\rangle_{d} +\frac{1}{p^{d}}=\left\langle \alpha_{1}\right\rangle_{L} + \left\langle\alpha_{2}\right\rangle_{L} +\frac{1}{p^{L}}.
 $$
\end{lemma}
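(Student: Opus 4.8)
The plan is to reduce the asserted identity to one elementary geometric-series computation, after isolating the relevant fact about the digits strictly between level $d$ and level $L$.

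First I would establish the \emph{digit fact}: for every integer $e$ with $d<e\le L$ one has $\alpha_1^{(e)}+\alpha_2^{(e)}=p-1$. Indeed, $e\le L$ forces $\alpha_1^{(e)}+\alpha_2^{(e)}\le p-1$ by the definition of $L$; and since $d=\sup\{e'\le L\mid \alpha_1^{(e')}+\alpha_2^{(e')}\le p-2\}$, an index $e$ with $d<e\le L$ cannot lie in that set, whence $\alpha_1^{(e)}+\alpha_2^{(e)}\ge p-1$. Combining the two inequalities gives the claim. I would also record that $d$ is well defined with $0\le d\le L$: the defining set is nonempty because the zeroth digits of $\alpha_1,\alpha_2$ vanish (both numbers lie in $[0,1)$), so $e=0$ always qualifies, and the set is bounded above by $L$.

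Suppose first that $L<\infty$. Splitting the truncation at level $d$ and invoking the digit fact for the tail, I would write
\[
\langle\alpha_1\rangle_L+\langle\alpha_2\rangle_L
=\sum_{e=1}^{L}\frac{\alpha_1^{(e)}+\alpha_2^{(e)}}{p^{e}}
=\bigl(\langle\alpha_1\rangle_d+\langle\alpha_2\rangle_d\bigr)+\sum_{e=d+1}^{L}\frac{p-1}{p^{e}} .
\]
The remaining sum is the finite geometric series $\sum_{e=d+1}^{L}(p-1)p^{-e}=p^{-d}-p^{-L}$, which also reads correctly as an empty sum when $d=L$. Substituting this value and transferring $p^{-L}$ to the other side yields exactly $\langle\alpha_1\rangle_L+\langle\alpha_2\rangle_L+\tfrac{1}{p^{L}}=\langle\alpha_1\rangle_d+\langle\alpha_2\rangle_d+\tfrac{1}{p^{d}}$.

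Finally, for $L=\infty$: if $d=\infty$ too, then both sides collapse to $\langle\alpha_1\rangle_\infty+\langle\alpha_2\rangle_\infty=\alpha_1+\alpha_2$ under the conventions $\langle\cdot\rangle_\infty=\mathrm{id}$ and $p^{-\infty}=0$; if $d<\infty$, the digit fact (now valid for all $e>d$) gives $\alpha_1+\alpha_2=\bigl(\langle\alpha_1\rangle_d+\langle\alpha_2\rangle_d\bigr)+\sum_{e>d}(p-1)p^{-e}=\langle\alpha_1\rangle_d+\langle\alpha_2\rangle_d+p^{-d}$, while the left-hand side of the identity equals $\alpha_1+\alpha_2+0$, so the two agree. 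I do not anticipate a genuine obstacle: the whole statement amounts to the identity $\sum_{e=d+1}^{L}(p-1)p^{-e}=p^{-d}-p^{-L}$ married to the observation on the digits between $d$ and $L$; the only care needed is with the boundary cases $d=L$, $d=0$, and $L=\infty$, which the remarks above dispose of.
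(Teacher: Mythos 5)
Your proof is correct and follows essentially the same route as the paper: you isolate the observation that $\alpha_1^{(e)}+\alpha_2^{(e)}=p-1$ for $d<e\le L$, and then evaluate the geometric sum, with the same case split according to whether $L$ (and $d$) is finite or infinite. The only cosmetic difference is that you state the digit fact once up front and also record the well-definedness of $d$, whereas the paper handles these implicitly inside the three cases.
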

\begin{proof}
We proceed by cases.

If $d=L=\infty$, then the result follows because $\langle \alpha_i\rangle_{\infty}=\alpha_i$.

If $L=\infty$ and $d<\infty$, we have $\alpha_1^{(e)}+\alpha_2^{(e)}=p-1$ for $e\geq d+1 $. Hence
$$
\alpha_1+\alpha_2=\left\langle \alpha_{1}\right\rangle_{L} + \left\langle \alpha_{2}\right\rangle_{L} +\frac{1}{p^{L}}
=\left\langle \alpha_{1}\right\rangle_{d} + \left\langle \alpha_{2}\right\rangle_{d} +\sum^\infty_{j=d+1}\frac{p-1}{p^{j}}
=\left\langle \alpha_{1}\right\rangle_{d} + \left\langle \alpha_{2}\right\rangle_{d} +\frac{1}{p^{d}}.
$$

If $L<\infty$ and $d<\infty$, we have $\alpha_1^{(e)}+\alpha_2^{(e)}=p-1$ for $d+1 \leq e \leq L$. Hence
$$
\left\langle \alpha_{1}\right\rangle_{L} + \left\langle \alpha_{2}\right\rangle_{L} +\frac{1}{p^{L}}
=\left\langle \alpha_{1}\right\rangle_{d} + \left\langle \alpha_{2}\right\rangle_{d} +\frac{p-1}{p^{d+1}}+\dots+\frac{p-1}{p^{L}}+\frac{1}{p^{L}}
=\left\langle \alpha_{1}\right\rangle_{d} + \left\langle \alpha_{2}\right\rangle_{d} +\frac{1}{p^{d}}.$$
\end{proof}

\begin{lemma}\label{Lemma-p-2}
Consider Notation \ref{NotationTest}, and assume that $a_{1}+a_{2}<1$.  Let
$$L=\sup\left\lbrace N\in \mathbb{N}\quad|\quad a_{1}^{(e)}+a_{2}^{(e)}\leq p-1\quad\text{for all}\quad 0\leq e\leq N \right\rbrace.$$
If $a_{1}^{(e)}+a_{2}^{(e)}\leq p-2$ for some $e\leq L$, then 
$$
\left( f^{ p^{e}(\langle a_{1}\rangle_{e}+\langle a_{2}\rangle_{e})+1}\right)^{[1/p^{e}]}=\left( g_{1}^{\lceil p^{e}a_{1}\rceil}\right)^{[1/p^{e}]}+\left( g_{2}^{\lceil p^{e}a_{2}\rceil}\right)^{[1/p^{e}]}.
$$
\end{lemma}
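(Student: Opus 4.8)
The plan is to recast both sides through the Frobenius‑root operation $(-)^{[1/p^{e}]}$ and to separate the statement into a formal containment and a delicate one, the latter using the Gorenstein--Artinian structure of $R_i/\mathfrak m_i^{[p^{e}]}$ together with the nonvanishing binomial coefficients coming from Lemma~\ref{lemma1}. First I would fix the bookkeeping: put $k_i:=\nu^{\mathfrak m_i}_{g_i}(p^{e})$ and $\theta:=p^{e}(\langle a_1\rangle_e+\langle a_2\rangle_e)+1$, so that $k_i=p^{e}\langle a_i\rangle_e$ by Proposition~\ref{tra} (hence $\theta=k_1+k_2+1$) and $\lceil p^{e}a_i\rceil=k_i+1$ by Lemma~\ref{2.4}(2); thus the assertion to prove is $\bigl(f^{\theta}\bigr)^{[1/p^{e}]}=\bigl(g_1^{k_1+1}\bigr)^{[1/p^{e}]}+\bigl(g_2^{k_2+1}\bigr)^{[1/p^{e}]}$. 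Since $e\le L$, the base‑$p$ digits of $k_1$ and $k_2$ are $a_i^{(1)},\dots,a_i^{(e)}$ and add without carrying, so $\binom{k_1+k_2}{k_1}\not\equiv 0\bmod p$ by Lemma~\ref{2.5}; feeding this together with the hypothesis $a_1^{(e)}+a_2^{(e)}\le p-2$ into Lemma~\ref{lemma1} shows that $\binom{\theta}{k_1}=\binom{\theta}{k_2+1}$ and $\binom{\theta}{k_2}=\binom{\theta}{k_1+1}$ are units of $\KK$. I also record that $g_i^{\ell}\in\mathfrak m_i^{[p^{e}]}$ precisely when $\ell>k_i$, and that the ideals $\bigl(g_i^{\ell}R\bigr)^{[1/p^{e}]}$ decrease in $\ell$ with $\bigcap_{\ell}\bigl(g_i^{\ell}R\bigr)^{[1/p^{e}]}=0$.

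The containment ``$\subseteq$'' is formal. Expanding $f^{\theta}=\sum_j\binom{\theta}{j}g_1^{j}g_2^{\theta-j}$, each summand with $j\ge k_1+1$ lies in $(g_1^{k_1+1})R$, while each summand with $j\le k_1$ has $\theta-j\ge k_2+1$ and thus lies in $(g_2^{k_2+1})R$; hence $f^{\theta}\in(g_1^{k_1+1})R+(g_2^{k_2+1})R$. Applying $(-)^{[1/p^{e}]}$, which is additive on sums of ideals and (for principal ideals supported on disjoint variable sets) multiplicative — both clear from Proposition~\ref{compute Frobeniusroot} via a tensor‑product monomial basis of $R$ over $R^{p^{e}}$ — gives the desired inclusion. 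No congruence is used here; the binomial nonvanishing is reserved for the reverse inclusion.

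For ``$\supseteq$'', the $g_1\leftrightarrow g_2$ symmetry lets me concentrate on $\bigl(g_2^{k_2+1}R\bigr)^{[1/p^{e}]}\subseteq\bigl(f^{\theta}\bigr)^{[1/p^{e}]}$. I would use the splitting $f^{\theta}=g_2^{k_2+1}q+g_1^{k_1+1}q'$, where $q=\sum_{j=0}^{k_1}\binom{\theta}{j}g_1^{j}g_2^{k_1-j}$ and $q'=\sum_{i=0}^{k_2}\binom{\theta}{k_1+1+i}g_1^{i}g_2^{k_2-i}$, noting that $g_1^{k_1+1}q'\in\mathfrak m_1^{[p^{e}]}R$, that $q-\binom{\theta}{k_1}g_1^{k_1}\in\mathfrak m_2R$, and hence that $g_2^{k_2+1}\bigl(q-\binom{\theta}{k_1}g_1^{k_1}\bigr)\in(g_2^{k_2+2})R$. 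Since $g_1^{k_1}\notin\mathfrak m_1^{[p^{e}]}$ and $R_1/\mathfrak m_1^{[p^{e}]}$ is Gorenstein Artinian with one‑dimensional socle spanned by $\sigma_1:=\prod_i x_i^{p^{e}-1}$, there is $\delta_1\in R_1$ with $\delta_1g_1^{k_1}\equiv\sigma_1\pmod{\mathfrak m_1^{[p^{e}]}}$; multiplying the splitting by $\delta_1$ gives $\delta_1f^{\theta}=\binom{\theta}{k_1}\,\sigma_1g_2^{k_2+1}+(\text{an element of }\mathfrak m_1^{[p^{e}]}R)+(\text{an element of }(g_2^{k_2+2})R)$. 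Because $\binom{\theta}{k_1}$ is a unit and $(\sigma_1R)^{[1/p^{e}]}=R$ — so that $(\sigma_1g_2^{k_2+1}R)^{[1/p^{e}]}=(g_2^{k_2+1}R)^{[1/p^{e}]}$ — this reduces the inclusion to showing that the two error terms lie in $\bigl((f^{\theta})^{[1/p^{e}]}\bigr)^{[p^{e}]}$.

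That absorption step is the part I expect to require real work rather than formal manipulation: the error in $(g_2^{k_2+2})R$ has Frobenius root inside $(g_2^{k_2+1}R)^{[1/p^{e}]}$ — exactly the ideal being chased — and the error in $\mathfrak m_1^{[p^{e}]}R$ drags in the symmetric ideal $(g_1^{k_1+1}R)^{[1/p^{e}]}$, so a direct substitution is circular; there is also the subtlety that the coefficient produced by the socle promotion is a unit only after localizing at $\mathfrak m$. The way I would close the loop is to run the two symmetric estimates simultaneously and iterate, replacing the correction ideals $(g_i^{k_i+t}R)^{[1/p^{e}]}$ at each stage by the strictly smaller $(g_i^{k_i+t+1}R)^{[1/p^{e}]}$ and using $\bigcap_t(g_i^{k_i+t}R)^{[1/p^{e}]}=0$ to squeeze the corrections out; the unit scalars $\binom{\theta}{k_1},\binom{\theta}{k_2}$ appearing at every stage — which is exactly where the hypothesis $a_1^{(e)}+a_2^{(e)}\le p-2$ enters, via Lemma~\ref{lemma1} — make each iteration invertible, and without that hypothesis only the inclusion of the previous paragraph remains valid. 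Granting both inclusions the lemma follows, and, combined with Lemma~\ref{lemma-d}, it is the precise input needed for the computation of $\tau\bigl(f^{{\bf c}^{\mathfrak m}(f)}\bigr)$ in Theorem~\ref{B}.
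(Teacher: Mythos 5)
Your ``$\subseteq$'' argument is essentially the paper's: expand $f^{\theta}$, observe that each summand lies in $(g_1^{k_1+1})R$ or $(g_2^{k_2+1})R$, and apply $(-)^{[1/p^e]}$. That part is fine.

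The ``$\supseteq$'' half, however, is not a proof. You correctly identify that your splitting $f^{\theta}=g_2^{k_2+1}q+g_1^{k_1+1}q'$, after socle promotion by $\delta_1$, gives only
$$
(g_2^{k_2+1}R)^{[1/p^e]}\subseteq (f^{\theta})^{[1/p^e]}+\mathfrak m_1(g_2^{k_2+1}R)^{[1/p^e]}+(g_2^{k_2+2}R)^{[1/p^e]}+(g_1^{k_1+1}R)^{[1/p^e]},
$$
and that substituting the symmetric estimate for $(g_1^{k_1+1}R)^{[1/p^e]}$ reinserts $(g_2^{k_2+1}R)^{[1/p^e]}$ on the right, which is circular. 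Your proposed repair --- iterate, pushing the error ideals to $(g_i^{k_i+t}R)^{[1/p^e]}$ with $t\to\infty$ and using $\bigcap_t(g_i^{k_i+t}R)^{[1/p^e]}=0$ --- is not carried out and does not obviously close: you would need the target ideal $(f^\theta)^{[1/p^e]}$ to be closed in the $\mathfrak m$-adic topology to conclude $J\subseteq\bigcap_t\bigl(I+J_t\bigr)=I$, and in a polynomial ring (where $\mathfrak m$ is not contained in the Jacobson radical) ideals are generally \emph{not} $\mathfrak m$-adically closed; moreover each pass of the recursion introduces its own socle multiplier, so it is not even clear the error ideals shrink in the way you describe. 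As written, the circularity is acknowledged but not resolved, and that is precisely where a proof is required.

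The paper avoids all of this by a direct basis computation rather than a descent argument. One extends $1,g_i,\dots,g_i^{\theta_i}$ to a free basis of $R_i$ over $R_i^{p^e}$ (possible because, multiplying any nontrivial $\KK$-linear relation $\sum c_jg_i^j\in\mathfrak m_i^{[p^e]}$ by the inverse of the unit $c_{j_0}+c_{j_0+1}g_i+\cdots$ in $R_{i,\mathfrak m_i}$ forces $g_i^{j_0}\in\mathfrak m_i^{[p^e]}$, a contradiction), takes the product basis of $R$ over $R^{p^e}$, writes $g_1^{\theta_1+1}=\sum h_j^{p^e}g_1^j+\sum h_{\theta_1+j}^{p^e}w_{1,j}$, and then reads the $h_j^{p^e}$ off as coefficients of $f^{\theta+1}$ in the distinguished basis elements $g_1^jg_2^{\theta_2}$ and $w_{1,j}g_2^{\theta_2}$; the binomial nonvanishing from Lemma~\ref{lemma1} guarantees the single cross term $\binom{\theta+1}{\theta_1+1}g_1^{\theta_1+1}g_2^{\theta_2}$ contributing those coefficients is not killed, and no other summand of $f^{\theta+1}$ touches those basis elements. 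This gives $h_j\in(f^{\theta+1})^{[1/p^e]}$ directly, with no iteration and no convergence question. I would recommend abandoning the socle-promotion route here and replacing it by this coefficient-reading argument, which is both shorter and actually complete.
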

\begin{proof}
Let $\theta_i= p^{e}\langle a_i\rangle_{e}= \nu_{g_{i}}^{\m_i}(p^e)$ for $i=1,2$,   and 
$\theta= p^{e}(\langle a_1\rangle_{e}+ \langle a_2\rangle_{e})=\theta_1+\theta_2.$ 

First, we check that
$$
\left( g_{1}^{\lceil p^{e}a_{1}\rceil}\right)^{[1/p^{e}]}+\left( g_{2}^{\lceil p^{e}a_{2}\rceil}\right)^{[1/p^{e}]}\subseteq \left( f^{ p^{e}(\langle a_{1}\rangle_{e}+\langle a_{2}\rangle_{e})+1}\right)^{[1/p^{e}]}
$$ 
for $i=1,2$.
We choose elements $w_{i,j}\in R_2$ such that $1,g_i,\ldots, g^{\theta_i}_i, w_{i,1},\dots,w_{i,s_i}$ is  a free basis of $R_{i}$ as $R_{i}^{p^{e}}$-module.
Then,  
$$
\mathcal{B}_e  =\left\{  g^{j_1}_1g^{j_2}_2,g^{j_1}_1 w_{2,k_2}, w_{1,k_1}g^{j_2}_2, w_{1,k_1}w_{2,k_2} \; |\;
0\leq j_i\leq \theta_i \; \&\; 0\leq k_i\leq s_i  \right\}\\
$$
is a free basis of  $R$ over $R^{p^{e}}$.
We fix $h_{j}\in R_{1}$ such that
$$
 g_{1}^{\theta_1+1}=\sum^{\theta_1}_{j=0}h_{j}^{p^{e}}g^j_{1}+\sum^{s_1}_{j=1}h_{\theta+j}^{p^{e}}w_{1,j}.
$$
Then, 
$$\left( g_{1}^{\lceil p^{e}a_{1}\rceil}\right)^{[1/p^{e}]}=(g_1^{p^e\langle a_1\rangle_e+1 })^{[1/p^e]} =\langle h_{j}\;| \;0\leq j\leq \theta_1+s_1 \rangle$$ 
by  Proposition \ref{compute Frobeniusroot}.
We  consider  $f^{\theta+1}$ in terms of the basis $\mathcal{B}_e$. We now show that each $h_{j}^{p^{e}}$ appears as  a coefficient.
Then,
\begin{align*}
f^{\theta+1}&=\sum^{\theta+1}_{k=0} {\theta+1 \choose k} g_{1}^{k}g_{2}^{\theta+1-k}
=
g_{2}^{\theta+1}+\dots
+{\theta+1\choose \theta_1+1} g_1^{\theta_1+1} g_{2}^{\theta_2}
+\dots+g_{1}^{\theta+1}.
\end{align*}
By Lemma \ref{lemma1}, we have 
$${\theta+1 \choose \theta_i+1} \not\equiv 0\mod p.$$
We  expand  $g^{\theta_1+1}_1 g_{2}^{\theta_2}$ as 
$$
g_{1}^{\theta_1+1} g_{2}^{\theta_2}
=     (\sum^{\theta_1}_{j=0}h_{j}^{p^{e}}g^j_{1}+\sum^{s_1}_{j=1}h_{\theta+j}^{p^{e}}w_{1,j})       g_{2}^{\theta_2}
=\sum^{\theta_1}_{j=0}h_{j}^{p^{e}}g^j_{1}g_{2}^{\theta_2}+\sum^{s_1}_{j=1}h_{\theta+j}^{p^{e}}w_{1,j}g_{2}^{\theta_2}.
$$
We note that the other summands are either multiples of  $w_{1,k_1}g_{2}^{j_2}$, or  $g_{1}^{j_1}w_{2,k_2}$, 
with 
$0\leq j_1\leq \theta_1$, $0\leq j_2<\theta_2$, and $0\leq k_i\leq s_i$. Then,    $h_{j}\in \left( f^{\theta+1}\right)^{[1/p^{e}]} $.  We conclude that $\left( g_{1}^{\lceil p^{e}a_{1}\rceil}\right)^{[1/p^{e}]}\subseteq \left( f^{ p^{e}(\langle a_{1}\rangle_{e}+\langle a_{2}\rangle_{e})+1}\right)^{[1/p^{e}]}$.
Similarly, $\left( g_{2}^{\lceil p^{e}a_{2}\rceil}\right)^{[1/p^{e}]}\subseteq \left( f^{ p^{e}(\langle a_{1}\rangle_{e}+\langle a_{2}\rangle_{e})+1}\right)^{[1/p^{e}]}$.

We now show the other containment. 
From the expression 
\begin{align*}
f^{\theta+1}
=
g_{2}^{\theta+1}+{\theta+1\choose 1}g_{1}g_{2}^{\theta}+\dots
+{\theta+1\choose \theta_1} g_1^{\theta_1} g_{2}^{\theta_2+1}+{\theta+1\choose \theta_1+1} g_1^{\theta_1+1} g_{2}^{\theta_2}
+\dots+g_{1}^{\theta+1},
\end{align*}
we conclude  that 
$
f^{\theta+1}\in \left( g_{1}^{\theta_{1}+1},  g_{2}^{\theta_{2}+1}\right).
$
Then,
$$
\left( f^{\theta+1}\right)^{[1/p^{e}]} \subseteq \left(   g_{1}^{\theta_1+1}, g_{2}^{\theta_2+1}\right) ^{[1/p^{e}]}
=  \left( g_{1}^{\lceil p^{e} a_{1}\rceil}\right)^{[1/p^{e}]}+\left( g_{2}^{\lceil p^{e} a_{2}\rceil}\right)^{[1/p^{e}]}.
$$
\end{proof}
We are now ready to  prove Theorem \ref{B}. 
\begin{theorem}\label{456}
Consider Notation \ref{NotationTest}.
Let
$$
L=\sup\left\lbrace N\in \mathbb{N}\quad|\quad a_{1}^{(e)}+a_{2}^{(e)}\leq p-1\quad\text{for all}\quad 0\leq e\leq N \right\rbrace
$$
and 
$d=\sup\left\lbrace e\leq L\quad|\quad a_{1}^{(e)}+a_{2}^{(e)}\leq p-2\right\rbrace.$ 
Then 
$$
\tau\left( f^{{\bf c}^{\mathfrak{m}}(f)}\right) =\left\lbrace \begin{array}{cl}
(f)&\text{if } {\bf c}^{\mathfrak{m}}(f)=1,  \\
\tau(g^{a_1}_{1})+\tau(g^{a_2}_{2})&\text{if } {\bf c}^{\mathfrak{m}}(f)\not\in p^{-e}\cdot\mathbb{N},\\
\left( g_{1}^{\lceil p^{d} a_{1}\rceil}\right)^{[1/p^{d}]}+\left( g_{2}^{\lceil p^{d} a_{2}\rceil}\right)^{[1/p^{d}]}&\text{if }  {\bf c}^{\mathfrak{m}}(f)\in \ZZ[\frac{1}{p}]\; \& \;{\bf c}^{\mathfrak{m}}(f)\neq 1 .
\end{array}\right.
$$
\end{theorem}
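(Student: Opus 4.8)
The plan is to split into the three cases of the statement, handling them essentially independently, and to reduce everything to the structural results already available: Theorem~\ref{4.2} (which gives ${\bf c}^{\m}(f)$), Lemma~\ref{LemmaPorwerAndTerms} (which controls when a power of $f$ lies in a sum of Frobenius powers), Lemma~\ref{Lemma-p-2} (which identifies the relevant Frobenius root with $(g_1^{\lceil p^e a_1\rceil})^{[1/p^e]}+(g_2^{\lceil p^e a_2\rceil})^{[1/p^e]}$ when $a_1^{(e)}+a_2^{(e)}\le p-2$), and Lemma~\ref{lemma-d} (which says the value of $\langle a_1\rangle_e+\langle a_2\rangle_e+p^{-e}$ stabilizes already at $e=d$). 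Throughout I would write $c={\bf c}^{\m}(f)$ and use Proposition~\ref{tra}, so that $\nu_f^{\m}(p^e)=p^e\langle c\rangle_e$, and the definition $\tau(f^c)=\bigcup_{e\ge1}(f^{\lceil cp^e\rceil})^{[1/p^e]}$.

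First, the case $c=1$. Here $\tau(f^1)=\bigcup_e (f^{p^e})^{[1/p^e]}$, and since $f^{p^e}\in (f)^{[p^e]}$ we get $(f^{p^e})^{[1/p^e]}\subseteq (f)$; conversely $(f)^{[p^e]}\subseteq (f^{p^e})$ forces $(f)\subseteq (f^{p^e})^{[1/p^e]}$ by minimality in Definition~\ref{fraexpo}. So this case is immediate and does not even use the Thom--Sebastiani structure. Incidentally, since $c=a_1+a_2$ can equal $1$ only in the $L=\infty$ regime, this also covers the degenerate overlap with the other cases.

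Next, the case $c\notin p^{-e}\cdot\NN$ for any $e$, i.e. $c$ is not a $p$-adic fraction; by Theorem~\ref{4.2} this forces $L=\infty$ and $c=a_1+a_2<1$, with $a_1,a_2$ adding without carrying. Then for every $e$, $a_1^{(e)}+a_2^{(e)}=c^{(e)}\le p-1$, and $\lceil cp^e\rceil = p^e\langle c\rangle_e+1 = p^e(\langle a_1\rangle_e+\langle a_2\rangle_e)+1$ by Lemma~\ref{2.4}(2)–(3). I would like to invoke Lemma~\ref{Lemma-p-2} termwise, but that lemma needs $a_1^{(e)}+a_2^{(e)}\le p-2$; so the first point is that since $c$ is not a $p$-adic fraction, the set of $e$ with $c^{(e)}\le p-2$ is infinite (otherwise eventually $c^{(e)}=p-1$, making $c$ eventually periodic of the tail $\ldots(p-1)(p-1)\ldots$, hence a $p$-adic rational — contradiction). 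Along that cofinal set of exponents $e$, Lemma~\ref{Lemma-p-2} gives $(f^{\lceil cp^e\rceil})^{[1/p^e]}=(g_1^{\lceil p^e a_1\rceil})^{[1/p^e]}+(g_2^{\lceil p^e a_2\rceil})^{[1/p^e]}$. Taking the union over this cofinal set — and using that the sequences $(g_i^{\lceil p^e a_i\rceil})^{[1/p^e]}$ are ascending and their union is $\tau(g_i^{a_i})$ (Definition~\ref{testid}, noting $\lceil p^e a_i\rceil=\lceil a_i p^e\rceil$) — yields $\tau(f^c)=\tau(g_1^{a_1})+\tau(g_2^{a_2})$, since union commutes with the finite sum of ideals. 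The care needed here is: the union over a cofinal subsequence equals the union over all $e$ precisely because the family $e\mapsto (f^{\lceil cp^e\rceil})^{[1/p^e]}$ is itself ascending in $e$ (a standard fact, following from $\lceil cp^{e+1}\rceil \le p\lceil cp^e\rceil$ and $(I^p)^{[1/p^{e+1}]}\supseteq \ldots$); I would cite this monotonicity rather than reprove it.

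Finally, the case $c\in\ZZ[1/p]$ with $c\ne1$. By Theorem~\ref{4.2} and Lemma~\ref{lemma-d}, whether $L<\infty$ or $L=\infty$ (with $d<\infty$ in the latter), one has $c=\langle a_1\rangle_d+\langle a_2\rangle_d+p^{-d}$, a $p$-adic rational with denominator dividing $p^d$. I would show $\tau(f^c)$ is already computed at level $e=d$: on one hand, by Proposition~\ref{PropTestIdealPpower}, $\tau(f^c)=(f^{p^d c})^{[1/p^d]}$ where $p^d c = p^d(\langle a_1\rangle_d+\langle a_2\rangle_d)+1$; on the other, at $e=d$ we have $a_1^{(d)}+a_2^{(d)}\le p-2$ by the very definition of $d$ (as the largest index $\le L$ with this property — here I should note $d$ is finite, since $a_1+a_2<1$ forces $c<1$ hence $c$, being $p$-adic, has a digit $\le p-2$; and if $L<\infty$ then $a_1^{(L+1)}+a_2^{(L+1)}\ge p$, so some earlier digit is $\le p-2$ unless... one must check $d\ge0$ is attained, which follows because $c\ne 1$). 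Then Lemma~\ref{Lemma-p-2} with $e=d$ gives exactly $(f^{p^d(\langle a_1\rangle_d+\langle a_2\rangle_d)+1})^{[1/p^d]}=(g_1^{\lceil p^d a_1\rceil})^{[1/p^d]}+(g_2^{\lceil p^d a_2\rceil})^{[1/p^d]}$, which is the claimed formula.

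The main obstacle I anticipate is the bookkeeping around the index $d$ in the last case: one must verify that $d$ is a well-defined finite natural number (existence of a digit $\le p-2$ below the threshold $L$, using $c\ne 1$ and $a_1+a_2\le1$), that $p^d c$ is exactly $p^d(\langle a_1\rangle_d+\langle a_2\rangle_d)+1$ (this is where Lemma~\ref{lemma-d} and Lemma~\ref{2.4}(2) combine), and that the defining digit inequality $a_1^{(d)}+a_2^{(d)}\le p-2$ is genuinely the hypothesis needed to apply Lemma~\ref{Lemma-p-2} at level $d$. The middle case's subtlety — passing from a cofinal subsequence of exponents to the full union — is the second place where a short justification is required but no real difficulty arises. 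Everything else is a direct assembly of the cited lemmas.
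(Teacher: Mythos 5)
Your proposal follows essentially the same path as the paper's own proof: the same three-way case split, the same appeal to Theorem~\ref{4.2}, Lemma~\ref{lemma-d}, Lemma~\ref{Lemma-p-2}, and Proposition~\ref{PropTestIdealPpower}. The only difference is that you make explicit a few verifications the paper leaves tacit (finiteness and well-definedness of $d$ when $c\ne1$, and the monotonicity of $e\mapsto (f^{\lceil cp^e\rceil})^{[1/p^e]}$ needed to pass from a cofinal set of exponents to the full union), all of which are correct.
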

\begin{proof} 
It is well know that $\tau(f^1)=(f)$. This settles the first case. We assume that ${\bf c}^{\mathfrak{m}}(f)\not=1$. 

If ${\bf c}^{\mathfrak{m}}(f)\not\in \ZZ[\frac{1}{p}]$, then $a_1$ and $a_2$ add without carrying by Theorem \ref{A}. Thus, $L=\infty$, and ${\bf c}^{\mathfrak{m}}(f)=a_1+a_2$.
There exists infinity $e\in\mathbb{N}$ such that $a_{1}^{(e)}+a_{2}^{(e)}\leq  p-2$, 
because  ${\bf c}^{\mathfrak{m}}(f)\not\in \ZZ[\frac{1}{p}]$.
The result follows from Lemma \ref{Lemma-p-2}, by picking $e\in\NN$ such that $a_{1}^{(e)}+a_{2}^{(e)}\leq  p-2$,
$\tau(g^{a_1}_1)=\left(g^{\lceil p^e a_1 \rceil}_1\right)^{[1/p^e]}$,
$\tau(g^{a_2}_2)=\left(g^{\lceil p^e a_2 \rceil}_2\right)^{[1/p^e]}$,
and 
$$
\tau(f^{a_1+a_2})
=
\left(f^{\lceil p^e( a_1+a_2) \rceil}\right)^{[1/p^e]}=\left(f^{p^e\left( \langle a_1\rangle_e+\langle a_2\rangle_e\right) +1}\right)^{[1/p^e]}.
$$

Finally, suppose that  ${\bf c}^{\mathfrak{m}}(f)\in p^{-e}\cdot\mathbb{N}$. Lemma \ref{lemma-d} implies that 
$${\bf c}^{\mathfrak{m}}(f)=\left\langle a_{1}\right\rangle_{L} + \left\langle a_{2}\right\rangle_{L} +\frac{1}{p^{L}}=\left\langle a_{1}\right\rangle_{d} + \left\langle a_{2}\right\rangle_{d} +\frac{1}{p^{d}}.$$ Then, 
\begin{align*}
\tau\left( f^{{\bf c}^{\mathfrak{m}}(f)}\right)=\tau\left(f^{\left\langle a_{1}\right\rangle_{d} + \left\langle a_{2}\right\rangle_{d} +\frac{1}{p^{d}}} \right)&=\left( f^{ p^{d}(\langle a_{1}\rangle_{d}+\langle a_{2}\rangle_{d})+1}\right)^{[1/p^{d}]}=\left( g_{1}^{\lceil p^{d}a_{1}\rceil}\right)^{[1/p^{d}]}+\left( g_{2}^{\lceil p^{d}a_{2}\rceil}\right)^{[1/p^{d}]}
\end{align*}
by  Proposition \ref{PropTestIdealPpower} and Lemma \ref{Lemma-p-2}.
\end{proof}

\begin{example}[{\rm \cite[Example 4.9]{HerDiag}}]\label{ExNotHdz}
Let $p>2$ a prime number,  $a=\frac{p-1}{2}$, and $b=\frac{p^2-1}{2}$. 
Consider the polynomials $g_{1}=x_{1}^{b}+\dots+x_{a}^{b} \in R_1=\mathbb{F}_{p}[x_{1},\dots,x_{a}]$, $g_{2}=y_{1}^{pb}+\dots+y_{a}^{pb} \in R_2= \mathbb{F}_{p}[y_{1},\dots,y_{a}]$, and $f=g_{1}+g_{2}\in R= R_1 \otimes_{\mathbb{F}_{p}} R_2.$
Let $\mathfrak{m}_{1}=\langle x_{1},\dots,x_{a}\rangle \subset R_1$,  $\mathfrak{m}_{2}=\langle y_{1},\dots,y_{a}\rangle \subset R_2$ and $\mathfrak{m}=\langle x_{1},\dots,x_{a},y_{1},\dots,y_{a}\rangle \subset R$   be the respective maximal homogeneous ideals. Theorem \ref{4.2} implies that ${\bf c}^{\mathfrak{m}}(f)=a_{1}+a_{2}=\frac{1}{p}\in p^{-1}\cdot\mathbb{N}$, where $a_{1}={\bf c}^{\mathfrak{m}_{1}}(g_{1})=a/b$, and $a_{2}={\bf c}^{\mathfrak{m}_{2}}(g_{2})=a/pb$.
Since
$$
g_{1}=x_{1}^{b}+\dots+x_{a}^{b}=(x_{1}^{a})^{p}\cdot x_1^{a}+\dots+(x_{a}^{a})^{p}\cdot x_a^{a},\quad \hbox{ and } \quad  g_{2}=(y_{1}^{b}+\dots+y_{a}^{b})^{p}\cdot1,
$$
we conclude from Theorem \ref{456}, that 
$$
\tau\left( f^{{\bf c}^{\mathfrak{m}}(f)}\right)=\left( g_{1}^{\lceil pa_{1}\rceil}\right)^{[1/p]}+\left( g_{2}^{\lceil pa_{2}\rceil}\right)^{[1/p]}=\langle x_{1}^{a},\dots,x_{a}^{a}\rangle+\langle y_{1}^{b}+\dots+y_{a}^{b}\rangle=\langle x_{1}^{a},\dots,x_{a}^{a},y_{1}^{b}+\dots+y_{a}^{b}\rangle.
$$
\end{example}

\begin{example}\label{extestide}
Consider the following polynomial $f=z^7w^2+z^5w^6+v^2u^3t^8\in \KK[t,u,v,w,z]$. For $p=97$, we want to compute the test ideal of $f$ at $a={\bf c}^\mathfrak{m}(f)$, where $\mathfrak{m}=\left(t, u, v, w, z\right)$. We split the polynomial $f$ into $g_{1}=z^7w^2+z^5w^6\in \KK[w,z]$, and $g_{2}=v^2u^3t^8\in \KK[t,u,v]$. Let $a_{1}, a_{2}$ be the $F$-threshold of $g_{1}$, and $g_{2}$, respectively.  We know that $a_{1}=3/16$, and  $a_{2}=1/8$, because $g_{2}$ is a monomial \cite[Example 4.3]{HerBinom}. Since, $a_{1}$, and $a_{2}$ add without carrying, by Theorem \ref{4.2} we have that $a=a_{1}+a_{2}=5/16$. The test ideals $\tau(g_{1}^{a_{1}})=\left(w, z\right)$, and $\tau(g_{2}^{a_{2}})=\left(t\right)$ were computed using Definition \ref{testid}. 
Finally, Theorem \ref{456} implies that $\tau\left( f^{a}\right)=\tau(g_{1}^{a_{1}})+\tau(g_{2}^{a_{2}})=\left(t, w, z\right)$.
\end{example}

\section*{Acknowledgments}
The authors thank Daniel J. Hernández for helpful comments and suggestions.
The second author started this  work while pursuing a master's degree at CIMAT. He thanks this institution for its support during his studies. We use Macaulay2 \cite{17} to compute several examples.

\bibliographystyle{alpha}
\bibliography{References}

\end{document}